\newtheorem{thm}{Theorem}[section]
\newtheorem{lem}[thm]{Lemma}
\newtheorem{prp}[thm]{Proposition}
\theoremstyle{definition}
\newtheorem{exmp}[thm]{Example}
\newtheorem{ass}[thm]{Assumption}
\newtheorem{clm}{Claim}
\newcommand{\Aut}{\operatorname{Aut}}
\newcommand{\aed}{\color{black}}
\newcommand{\bed}{\color{black}}
\newcommand{\ced}{\color{black}}
\newcommand{\zed}{\color{black}}
\title{On symmetries of edge and vertex colourings of graphs}
\author{Florian Lehner\footnote{Address: Mathematics Institute, University of Warwick, U.K. \newline Florian Lehner was supported by the Austrian Science Fund (FWF), grant J 3850-N32} and Simon M. Smith\footnote{Address: Charlotte Scott Research Centre for Algebra, School of Mathematics and Physics, University of Lincoln, U.K.}}
\begin{document}
\maketitle

\begin{abstract}
Let $c$ and $c'$ be edge or vertex colourings of a graph $G$. We say that $c'$ is less symmetric than $c$ if the stabiliser (in $\Aut G$) of $c'$ is contained in the stabiliser of $c$. 

We show that if $G$ is not a bicentred tree, then for every vertex colouring of $G$ there is a less symmetric edge colouring with the same number of colours. On the other hand, if $T$ is a tree, then for every edge colouring there is a less symmetric vertex colouring with the same number of edges.

Our results can be used to characterise those graphs whose distinguishing index is larger than their distinguishing number.
\end{abstract}
\section{Introduction}

This paper concerns the symmetries  of colourings of graphs. A large number of research papers have been written on this topic; mostly, these have focused on finding colourings with few symmetries and a small number of colours. In particular, the \emph{distinguishing number} $D(G)$ of a graph is the least number of colours such that there is a vertex colouring which is not preserved by any non-identity automorphism. Motivated by a recreational mathematics problem, this notion was introduced by Albertson and Collins in 1996 \cite{albertsoncollins-distinguishing} and has since received considerable attention.

Recently, Kalinowski and Pil\'sniak~\cite{kalinowski-distindex} suggested the following edge version. The \emph{distinguishing index} $D'(G)$ of a graph $G$ is the smallest number of colours such that there is an edge colouring which is not preserved by any non-identity automorphism. Many results about distinguishing numbers hold for distinguishing indices as well, sometimes with almost identical proofs (see \cite{Broere-distindex,imrich-distindex,kalinowski-distindex}, for example). Furthermore, there are problems such as Tucker's Infinite Motion Conjecture \cite{tucker-infinitemotion} that are still wide open for vertex colourings, but whose edge colouring version has a relatively simple proof \cite{lehner-edgemotion}. This suggests that finding edge colourings with few symmetries is generally easier than finding such vertex colourings. This suggestion is further supported by the following result. It was first proved for finite graphs in \cite{kalinowski-distindex}. See \cite{imrich-distindex} for an extension to infinite graphs and \cite{lehner-edgemotion} for an alternative proof for both finite and infinite graphs.

\begin{thm}
\label{thm:distindexdistnumber}
If $G$ is a connected graph of order at least $3$, then $D'(G) \leq D(G) + 1$.
\end{thm}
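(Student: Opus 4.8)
The plan is to turn a distinguishing \emph{vertex} colouring of $G$ into a distinguishing \emph{edge} colouring using just one extra colour. Fix a vertex colouring $\chi\colon V(G)\to\{1,\dots,d\}$ with $d=D(G)$ that is preserved by no non-identity automorphism, and set aside a new colour $d+1$. Choose a spanning tree $T$ of $G$, root it at some vertex $r$, and for $v\neq r$ write $e_v$ for the edge joining $v$ to its parent. The \emph{basic colouring} $c'$ is then defined by $c'(e_v)=\chi(v)$ for $v\neq r$ and $c'(e)=d+1$ for every edge $e\notin E(T)$; it uses at most $d+1$ colours. The first, easy, observation is that any $\phi\in\Aut G$ preserving $c'$ maps $E(T)$ onto $E(T)$: if $G\neq T$ this is because the colour-$(d+1)$ edges are exactly the non-tree edges, and if $G=T$ it is immediate. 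Hence $\phi$ restricts to an automorphism of the tree $T$ and therefore fixes the centre of $T$.

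The argument now closes effortlessly whenever $T$ can be picked with a \emph{central vertex} $z$: root $T$ at $z$, so that every colour-preserving $\phi$ satisfies $\phi(z)=z$ and consequently respects the parent map, i.e.\ $\phi(e_v)=e_{\phi(v)}$ for all $v\neq z$. Then
\[
  \chi(\phi(v)) = c'(e_{\phi(v)}) = c'(\phi(e_v)) = c'(e_v) = \chi(v)
\]
for all $v\neq z$, and trivially $\chi(\phi(z))=\chi(z)$, so $\phi$ preserves $\chi$ and hence is the identity. This already proves the theorem for every $G$ possessing a spanning tree with a central vertex, which is the overwhelming majority of graphs.

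The hard case, and where I expect essentially all the work to lie, is when \emph{every} spanning tree of $G$ is bicentral; this happens only for rather special $G$ (trees of odd diameter, and among non-trees essentially only the even cycles). Now the centre of $T$ is an edge $f=\{z_1,z_2\}$, and the observation above gives only that $\phi$ either fixes $z_1,z_2$ or swaps them, while a swap can preserve the basic colouring. To destroy the swap I would root all of $T$ at $z_1$, keep $c'(e_v)=\chi(v)$ for every $v$ except one child $u$ of $z_2$ (such a $u$ exists because $z_2$ has degree at least $2$ in $T$), recolour that single edge $c'(e_u)=d+1$, and still colour non-tree edges $d+1$; note $f=e_{z_2}$ is now coloured $\chi(z_2)$. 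When $G$ itself is a tree, a colour-preserving $\phi$ fixes the unique colour-$(d+1)$ edge $e_u$ and hence permutes its endpoints $\{z_2,u\}$; since $u\neq z_1$ this makes $\phi(z_2)=z_1$ impossible, so $\phi$ fixes $z_1$ and $z_2$, hence also $u$, and the computation of the previous paragraph applied to all $v\notin\{z_1,u\}$ forces $\phi=\id$. For the few non-tree graphs all of whose spanning trees are bicentral (even cycles, and possibly a handful of other small graphs) one exhibits a suitable edge colouring directly. The delicate point throughout is to keep the spare colour $d+1$ sitting on exactly the right edges, so that the reflection across the central edge is broken while $\chi$ stays fully encoded in the remaining edge colours.
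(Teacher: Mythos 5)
Your construction handles two of the three cases correctly: if $G$ has a spanning tree with a central vertex, the parent-edge encoding $c'(e_v)=\chi(v)$ plus the spare colour on non-tree edges does force every $c'$-preserving automorphism to preserve $\chi$, and your fix for bicentred trees (recolouring one child-edge of $z_2$ with the spare colour) is also sound, since in a bicentred tree of order at least $3$ the centre $z_2$ indeed has a child. The genuine gap is the remaining case, which you dismiss with the unproven claim that the only non-tree graphs all of whose spanning trees are bicentred are ``essentially the even cycles, and possibly a handful of other small graphs''. That claim is false: take $C_4$ with a pendant vertex attached to each of its four cycle vertices. Every spanning tree of this graph is obtained by deleting one cycle edge, and is a caterpillar of diameter $5$, hence bicentred; the same happens for $C_{2k}$ with a pendant (or a pendant path of any fixed length) at every vertex, so the exceptional class is infinite and contains arbitrarily large graphs. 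For these graphs you give no colouring at all, and the patch you used for bicentred trees does not transfer, because in a non-tree the spare colour $d+1$ is already committed to marking exactly the non-tree edges, so recolouring a tree edge with it destroys the step ``$\phi$ maps $E(T)$ onto $E(T)$'' on which everything rests. Until this case is treated (either by a correct characterisation of the exceptional graphs together with explicit colourings, or by a different mechanism for breaking the central-edge reflection), the proof is incomplete.

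A secondary point: the theorem as stated in the paper makes no finiteness assumption (the preliminaries explicitly allow infinite graphs), and your argument is intrinsically finite --- spanning trees of infinite graphs have no centre, and tree automorphisms may be translations, so the very first reduction fails. The paper itself does not reprove the theorem but cites earlier work; the machinery it does develop (the canonical edge colouring $uv\mapsto c(u)+c(v)$ over an Abelian group, Proposition~\ref{prp:preservecanonical} and Lemmas~\ref{lem:nofixedpoint}--\ref{lem:changecolouring}) is the route taken in the cited alternative proof, and it avoids both of the problems above because it never needs a centre or a classification of spanning trees.
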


\ced It was shown in \cite{imrich-distindex} that  Theorem~\ref{thm:distindexdistnumber} remains true even \bed if $D(G)$ is an arbitrary infinite cardinal. Since $\alpha = \alpha + 1$ for any infinite cardinal $\alpha$, this implies that for any graph $G$ with infinite distinguishing number we have $D'(G) \leq D(G)$.\zed \\

In this paper we 
\ced thoroughly investigate the relationship between $D'(G)$ and $D(G)$. \zed For this purpose, we say that an (edge or vertex) colouring $c'$ is \emph{less symmetric} than an (edge or vertex) colouring $c$, if the stabiliser of $c'$ (that is, the setwise stabiliser of those edges or vertices coloured $c'$) is a subgroup of the stabiliser of $c$. With this notion we have the following results.

\bed
\begin{thm}
\label{thm:vertextoedgecolouringinfinite}
Let $G$ be a connected graph and let $c$ be a $k$-vertex colouring of $G$ where $k$ is an infinite cardinal. Then there is a $k$-edge colouring $c'$ of $G$ which is less symmetric than $c$.
\end{thm}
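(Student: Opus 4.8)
The plan is to build an edge colouring $c'$ from which the vertex colouring $c$ can be recovered by a rule referring only to $G$ and $c'$; such a rule commutes with $\Aut G$, so every automorphism preserving the colour classes of $c'$ must then preserve $c$, giving $\operatorname{Stab}(c')\subseteq\operatorname{Stab}(c)$. For the construction, let $K$ be the set of colours used by $c$, so $|K|=k$, and set $c'(uv):=\{c(u),c(v)\}$, an element of the set $P$ of one- and two-element subsets of $K$. Since $k$ is infinite, $|P|=k$; and $c'$ uses $k$ colours, because every $\gamma\in K$ occurs as $c(v)$ for some $v$, which (as $G$ is connected with at least two vertices) lies on an edge whose $c'$-colour contains $\gamma$, while each colour of $c'$ has at most two elements. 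Thus $c'$ is a $k$-edge colouring.

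Call a vertex $v$ \emph{settled} unless all of its neighbours carry one and the same colour, which is moreover different from $c(v)$. A short computation shows that $\bigcap_{e\ni v}c'(e)=\{c(v)\}$ whenever $v$ is settled, so $c(v)$ is then determined by $c'$; and if $v$ has a neighbour $w$ with $c(w)$ already known, then $c(v)$ equals the unique element of $c'(vw)$ other than $c(w)$, or $c(w)$ itself if $c'(vw)$ is a singleton. Let $R$ be the smallest vertex set containing every settled vertex and closed under adding any vertex that has a neighbour in $R$. For $g\in\operatorname{Stab}(c')$ the notions ``settled'' and ``$R$'' are $g$-invariant, and an induction along the construction of $R$, using the two reconstruction rules, gives $c(gv)=c(v)$ for all $v\in R$.

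It remains to show $R=V(G)$. The complement $V(G)\setminus R$ is a union of connected components of $G$: if $v\notin R$, then $v$ is unsettled --- so all its neighbours share a single colour $\beta\neq c(v)$ --- and $v$ has no neighbour in $R$, so every neighbour of $v$ again lies outside $R$. As $G$ is connected, $V(G)\setminus R$ is empty or all of $V(G)$; in the latter case every vertex is adjacent only to vertices of one colour different from its own, and then, following walks from any fixed vertex, one sees that $c$ uses at most two colours, contradicting $|K|=k\geq\aleph_0$. Hence $R=V(G)$, so $c\circ g=c$ for every $g\in\operatorname{Stab}(c')$, as required. (In fact every $g\in\operatorname{Stab}(c)$ also fixes $c'$, so even $\operatorname{Stab}(c')=\operatorname{Stab}(c)$.) The substance of the argument is this last step --- pinning down exactly when the unordered-pair colouring fails to determine $c$ locally, and using connectedness together with the infinitude of $k$ to exclude that case; the cardinality bookkeeping and the equivariance of the reconstruction are routine.
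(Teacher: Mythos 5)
Your route is genuinely different from the paper's, and its core is sound. The paper endows the colour set with an abelian group structure, takes the canonical colouring $c'(uv)=c(u)+c(v)$, and then recolours two edges meeting at a single vertex with two brand-new colours; any automorphism preserving the result must fix these two edges and hence their common endpoint, and Lemmas~\ref{lem:nofixedpoint} and~\ref{lem:changecolouring} then transfer this to preservation of $c$. Your unordered-pair colouring $c'(uv)=\{c(u),c(v)\}$ avoids the group structure and the auxiliary lemmas entirely, replaces them with a clean equivariant reconstruction argument (the ``settled'' vertices and the closure $R$), and even yields equality of stabilisers rather than mere containment. Your verification of the reconstruction rules and of the induction along $R$ is correct, as is the cardinality bookkeeping.

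There is, however, a gap at the final step. You set $K$ equal to the set of colours \emph{used} by $c$ and assert $|K|=k$; but by the paper's definition a $k$-vertex colouring is merely a map into a colour set of size $k$ and need not be surjective, and your concluding contradiction (``$c$ uses at most two colours, contradicting $|K|=k\geq\aleph_0$'') uses surjectivity in an essential way. This is not only a wording issue: when the image of $c$ is small the construction itself can fail. If $c$ is a proper $2$-colouring of, say, the double ray or of $K_{\aleph_0,\aleph_0}$, regarded as a $k$-colouring with unused colours, then every edge receives the same pair as its $c'$-colour, so $\operatorname{Stab}(c')=\Aut G$, which contains class-swapping automorphisms outside $\operatorname{Stab}(c)$. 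Your own analysis locates the failure exactly: it occurs precisely when every vertex is unsettled, i.e.\ when $c$ is a proper $2$-colouring of $G$. The repair is easy because $k$ is infinite and there are unused colours: in this exceptional case modify the colouring by giving two edges incident to a common vertex two colours occurring nowhere else (this is in effect the paper's argument, via Lemma~\ref{lem:changecolouring}), or simply note that your argument is complete whenever $c$ uses at least three colours and treat the remaining case separately. The paper's ``two fresh colours'' trick is insensitive to the image of $c$, which is exactly what your construction lacks; your approach, once patched, is more self-contained and gives the stronger conclusion $\operatorname{Stab}(c')=\operatorname{Stab}(c)$ outside the exceptional case.
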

\zed

\begin{thm}
\label{thm:vertextoedgecolouring}
Let $G$ be a connected graph and let $c$ be an arbitrary $k$-vertex colouring of $G$ \bed with $k \in \mathbb{N}$. \zed  If $G$ is not a bicentred tree, then there is an $k$-edge colouring $c'$ which is less symmetric than $c$.
\end{thm}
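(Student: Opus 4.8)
The plan is to produce, for the given colouring $c$, an edge colouring $c'$ using at most $k$ colours from which $c$ itself can be recovered by a purely graph-theoretic, hence automorphism-invariant, rule. Once $c$ is recoverable from $c'$ in this sense, every automorphism fixing $c'$ must fix the recovered colouring and therefore fixes $c$; so the stabiliser of $c'$ is contained in the stabiliser of $c$, i.e.\ $c'$ is less symmetric than $c$. The infinite case is Theorem~\ref{thm:vertextoedgecolouringinfinite}, so we may assume $k$ is finite. As $G$ is connected and not a bicentred tree, it is either a \emph{centred} tree, whose centre is a single vertex, or else it contains a cycle; I would treat these two cases separately.

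\emph{Centred trees.} Let $r$ be the centre of the tree $T$. Every automorphism of $T$ fixes $r$, hence preserves distances to $r$ and the induced rooted structure; in particular each vertex $v\neq r$ has a unique parent edge $e_v$ joining it to its neighbour nearer $r$, and $\phi(e_v)=e_{\phi(v)}$ for all $\phi\in\Aut T$. Define $c'(e_v):=c(v)$ for every $v\neq r$. If $\phi$ fixes $c'$ then $c(\phi(v))=c'(e_{\phi(v)})=c'(\phi(e_v))=c'(e_v)=c(v)$ for all $v\neq r$, while $c(\phi(r))=c(r)$ holds automatically; hence $\phi$ fixes $c$. Thus $c'$ is as required, and it uses at most $k$ colours.

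\emph{Graphs with a cycle.} Here there may be no $\Aut G$-fixed vertex, and this case is the technical heart of the argument. The idea is to single out a canonical \emph{root structure} $R$ that is fixed setwise by $\Aut G$ --- one may take the centre of the block-cut tree of $G$, which is either a cut-vertex or a single $2$-connected block (alternatively, one can peel off the pendant trees down to the $2$-core and take a canonical substructure there). The pendant trees hanging off $R$ are tree-like, so their edges can be coloured by the parent-edge rule of the previous case, with parents directed towards $R$; it then remains to encode $c$ on $R$ together with the edges meeting $R$. What makes this possible with only $k$ colours --- and what fails for bicentred trees --- is that a connected graph containing a cycle has at least as many edges as vertices, so one has a ``spare'' edge, beyond those of a spanning tree, to break ties; in a bicentred tree the canonical structure is instead an edge whose two endpoints are interchanged by an automorphism, with no further edge available to distinguish them. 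I expect this last encoding step to be the main obstacle: when $R$ is a $2$-connected block it carries its own nontrivial automorphisms, and one must colour the edges of $R$ (and the pendant edges at $R$) with at most $k$ colours so that the restriction of $c$ to $R$ is recoverable in an $\Aut R$-invariant way --- already when $R$ is a cycle this requires a careful cyclic argument, and in general it seems to call for an induction on $|V(R)|$ or an ear-decomposition argument, all while keeping the construction consistent with the colouring already chosen on the pendant trees and while staying within $k$ colours.
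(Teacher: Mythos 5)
Your construction for unicentred trees (colour each non-root vertex's parent edge with that vertex's colour) is correct, and is if anything more direct than the paper's route via canonical edge colourings and Lemma~\ref{lem:colouringbijection}. But the proposal has a genuine gap exactly where you flag it: for graphs containing a cycle you give only a plan, and the step you yourself call ``the main obstacle'' --- colouring the edges of a $2$-connected block $R$ (together with the pendant edges at $R$) with at most $k$ colours so that $c\restriction R$ is recoverable $\Aut$-invariantly --- is the entire content of the theorem in that case. Note that when $G$ is itself $2$-connected your block-cut-tree reduction returns the original problem unchanged, so nothing has been proved for, say, cycles, complete graphs or hypercubes; and the ``spare edge beyond a spanning tree'' count is a heuristic, not a mechanism. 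The paper resolves this case quite differently: it first reduces to a prime number $p$ of colours, works with the canonical edge colouring $c'(uv)=c(u)+c(v)$ over $\mathbb Z_p$, observes that alternating sums of $c'$ along closed walks recover $2c(v)$ (so for odd $p$ and non-bipartite $G$ the canonical colouring already suffices), and otherwise perturbs $c'$ on one or two carefully chosen edges. The choice is organised by the equivalence relation on edges lying on cycles ($e\sim f$ iff every cycle through $e$ contains $f$), with separate arguments according to whether some class is a cycle, some path class has length at least $2$, or all path classes are single edges; Lemmas~\ref{lem:nofixedpoint} and~\ref{lem:changecolouring} then convert control of the perturbed edges into control of $c$. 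None of this machinery, nor a substitute for it, appears in your sketch.

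A second, smaller gap: your case split ``unicentred tree or contains a cycle'' is only valid for finite graphs (or rayless trees), while the theorem allows infinite $G$ --- a double ray, or any tree containing one, is not a bicentred tree in the paper's sense, is not unicentred, and has no cycle, and there need not be any vertex fixed by all automorphisms, so your parent-edge rule has no root to hang on. The paper covers these cases separately: for trees with a ray but no double ray it invokes Halin's theorem that every automorphism fixes some vertex and applies Lemma~\ref{lem:nofixedpoint} to the canonical edge colouring, and for trees containing double rays it builds a special colouring of the spine (the union of all double rays) and combines it with the canonical colouring elsewhere. If you intend to treat only finite graphs you must say so, but then you are proving a weaker statement than the one claimed.
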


In particular, this implies that the inequality in Theorem~\ref{thm:distindexdistnumber} can only be sharp if $G$ is a bicentred tree \bed with finite distinguishing number\zed. Note that examples of trees where Theorem~\ref{thm:distindexdistnumber} holds with equality are known, \bed see \cite{kalinowski-distindex}\zed, whence Theorem~\ref{thm:vertextoedgecolouring} does not extend to all graphs. In fact, in the case of trees we show that the converse of Theorem~\ref{thm:vertextoedgecolouring} is true.

\begin{thm}
\label{thm:edgetovertexcolouring}
Let $T$ be a tree and let $c'$ be an arbitrary $k$-edge colouring of $T$. \zed Then there is a $k$-vertex colouring $c$ which is less symmetric than $c'$.
\end{thm}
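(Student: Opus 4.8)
The plan is to encode each edge colour class into a vertex colour class by subdividing, conceptually, then pushing the colour onto an incident vertex. Concretely, root the tree $T$ at its centre (a single vertex if $T$ is centred, or the central edge if $T$ is bicentred), and orient every edge away from the root; this orientation is preserved by every automorphism of $T$ that fixes the centre, and since $\Aut T$ always stabilises the centre (or the central edge as a set), the orientation is "almost" canonical. Each non-root vertex $v$ has a unique parent edge $e_v$, so $v \mapsto e_v$ is a bijection between the non-root vertices and the edges of $T$. Given the $k$-edge colouring $c'$, I would like to define a vertex colouring $c$ by setting $c(v) = c'(e_v)$ for each non-root vertex $v$, and giving the root (or the two central vertices) some fixed colour(s). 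The key point to verify is that $\Aut(T,c) \leq \Aut(T,c')$: if $g$ preserves $c$, then for any edge $e = e_v$ we have $g(e) = g(e_v) = e_{g(v)}$ (using that $g$ respects the parent map, which holds because $g$ fixes the centre and hence the orientation), and $c'(e_{g(v)}) = c(g(v)) = c(v) = c'(e_v) = c'(e)$, so $g$ preserves $c'$.

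The main obstacle is the bicentred case: there $\Aut T$ may swap the two central vertices $x_1, x_2$, and such a swap reverses the orientation of every edge, so the parent map is not $g$-invariant and the argument above breaks for orientation-reversing automorphisms. To handle this I would treat the two "halves" $T_1, T_2$ of $T$ (the components of $T - \{x_1 x_2\}$ containing $x_1$ and $x_2$ respectively) and root each half at its central vertex. Within each half the orientation is preserved by the stabiliser of the central edge that fixes $x_1$ and $x_2$ pointwise. An automorphism $g$ that swaps $x_1$ and $x_2$ induces an isomorphism $T_1 \to T_2$; for $c$ to force $g$ to respect $c'$ one needs the colouring to be set up so that the parent-edge encodings on the two sides are compatible — i.e. the bijection $v \mapsto e_v$ together with the half-swap should still send $c'$-monochromatic classes to $c'$-monochromatic classes. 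Since $g$ maps the parent edge of $v \in T_1$ to the parent edge of $g(v) \in T_2$ (the orientation reversal is "absorbed" by the swap of the two roots), the same computation $c'(e_{g(v)}) = c(g(v)) = c(v) = c'(e_v)$ goes through verbatim once we also check the two central vertices: we need $c(x_1), c(x_2)$ chosen so that if $g$ swaps or fixes $x_1,x_2$ then $g$ respects the central edge's colour, which is automatic since there is only one central edge.

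Assembling this: I would (1) define the rooting and the parent-edge bijection in the centred and bicentred cases uniformly by rooting at the centre resp. rooting each half at its centre; (2) define $c$ on non-central vertices by transport along the parent-edge bijection, and assign the one or two central vertices a fresh handling that does not increase the colour count (reuse an existing colour, e.g. $c'$ of the central edge, or of some fixed parent edge if $T$ has at least two edges; the degenerate tiny cases $|E(T)| \le 2$ are checked by hand); (3) verify $\Aut(T,c)\le\Aut(T,c')$ by the displayed computation, splitting into automorphisms that fix the centre and, in the bicentred case, those that swap the two central vertices; and (4) confirm $c$ uses at most $k$ colours. I expect step (2) — getting the central vertices coloured without spending an extra colour while keeping the verification in step (3) clean — to be the only genuinely fiddly part; everything else is bookkeeping around the canonical rooting of a tree.
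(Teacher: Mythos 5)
Your argument is correct for finite trees: rooting at the centre (or at the two centres of the central edge) makes the parent map $v\mapsto e_v$ equivariant under all of $\Aut T$, and your computation $c'(e_{g(v)})=c(g(v))=c(v)=c'(e_v)$, together with the fact that the central edge is setwise invariant, does give $\Aut(T,c)\leq\Aut(T,c')$; the colours of the one or two central vertices are indeed irrelevant (any reused colour works, since a different choice can only shrink the stabiliser of $c$). The genuine gap is that the theorem is stated for arbitrary trees, and the paper makes no finiteness assumption -- elsewhere it explicitly deals with trees containing rays and double rays. Your whole construction hinges on the existence of a vertex or edge stabilised by every automorphism, which fails for infinite trees with rays (consider the double ray, or the $3$-regular tree): there is then no centre to root at, and rooting at an arbitrary vertex does not help, because an automorphism preserving $c$ need not fix that vertex, so $g(e_v)=e_{g(v)}$ fails and the key computation collapses. (Your plan could be extended to rayless infinite trees via the kernel-based centre the paper defines, but not beyond.)

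The paper's proof avoids any appeal to a centre. It endows the $k$ colours with an abelian group structure and proves (its Lemma on edge-to-vertex colourings) that for an arbitrary base vertex $v$ there is a unique vertex colouring $c$ with $c(v)=0$ whose canonical edge colouring $uw\mapsto c(u)+c(w)$ is exactly $c'$; the colouring is built by propagating $c(u):=c'(uw)-c(w)$ outward from $v$. The earlier proposition that every vertex colouring is less symmetric than its canonical edge colouring then gives the desired stabiliser containment with no fixed-point argument at all, so it applies verbatim to every tree, finite or infinite, and $c$ uses only the $k$ group elements as colours. Your ``copy the parent edge's colour to the child'' decoding is a genuinely different (and group-free) route that settles the finite case, but as written it does not prove the theorem in the stated generality; to repair it you would need a substitute for the canonical root, which is precisely what the paper's reconstruction sidesteps.
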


Finally, as an application of our results we characterise all graphs for which Theorem~\ref{thm:distindexdistnumber} is sharp.

\section{Preliminaries}

\subsection{Basic notions and notations}

Throughout this paper $G = (V,E)$ will denote a graph with vertex set $V$ and edge set $E$. We follow \cite{diestel-buch} for notions that are not explicitly defined. Note that we do not make assumptions on the cardinality of $V$ or the degree of a vertex. We will, however, require our graphs to be simple (i.e.\ no multiple edges or loops) and undirected. The automorphism group of $G$ will be denoted by $\Aut G$, and for $\gamma \in \Aut G$ and $x \in V \cup E$ we will denote by $\gamma x$ the image of $x$ under the automorphism $\gamma$.

A \emph{vertex colouring} of $G$ is a function from $V$ to a set $C$ of colours. If $|C| = k$ we will speak of a $k$-vertex colouring and \bed if $k$ is finite, we \zed will usually assume that $C = \{1,2,\ldots,k\}$. We say that $\gamma \in \Aut G$ \emph{preserves} a vertex colouring $c$, if for every $v \in V$ ($e \in E$) we have $c(v) = c(\gamma v)$. The \emph{stabiliser} of a vertex colouring is the set of all automorphisms which preserve it. Analogous definitions can be made for edge colourings. We say that a vertex or edge colouring $c'$ is \emph{less symmetric} than another vertex or edge colouring $c$, if the stabiliser of $c'$ is contained in the stabiliser of $c$. Clearly, the relation of being less symmetric is transitive, and if $c'$ is less symmetric than $c$ and $c$ is less symmetric than $c'$, then the stabilisers of $c$ and $c'$ coincide.

A \emph{centre} of a graph is a vertex which minimises the maximal distance to other vertices. It is a well known fact that every finite tree has either one centre or two adjacent centres. In the first case we call the tree \emph{unicentred}, in the second case we call it \emph{bicentred}. Note that every automorphism of a unicentred tree fixes the centre whereas every automorphism of a bicentred tree fixes the \emph{central edge}, i.e.\ the edge connecting its two centres. 

While the notion of a centre is not always well defined for infinite graphs, we can still define an analogous notion for certain infinite trees. A \emph{ray} is a one sided infinite path, a graph is called \emph{rayless}, if it does not contain a ray. By results of Schmidt \cite{schmidt-rayless} (see \cite{halin-rayless} for an exposition in English) every rayless tree has a canonical finite subtree (its kernel) which is preserved by every automorphism. Hence we can define a centre of a rayless tree to be a centre of its kernel and call a rayless tree unicentred or bicentred, depending on whether we get a unique centre or a two adjacent centres.

\subsection{Canonical colourings}

The following way to obtain an edge colouring from a vertex colouring was introduced in \cite{lehner-edgemotion} in order to prove Theorem~\ref{thm:distindexdistnumber}. Let $G$ be a graph and let $c\colon V \to C$ be a colouring of the vertex set of $G$ with colours in $C$. Without loss of generality assume that $C$ carries the additional structure of an Abelian group\bed ---by \cite{hajnal-algebraicAC}, any set can be endowed with an Abelian group structure as long as we assume the Axiom of Choice\zed . Now we can obtain a colouring of the edge set by $e \mapsto c(u) + c(v)$ for $e = uv$. We will call such an edge colouring a \emph{canonical} edge colouring.

\begin{prp}
\label{prp:preservecanonical}
Any vertex colouring is less symmetric than the corresponding canonical edge colouring.
\end{prp}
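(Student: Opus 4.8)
The plan is to verify directly the inclusion of stabilisers; this is the ``easy'' direction and no real obstacle arises. Write $c\colon V \to C$ for the given vertex colouring, fix the Abelian group structure on $C$ that is used to build the canonical colouring, and let $\bar c$ denote the resulting edge colouring, so that $\bar c(e) = c(u) + c(v)$ whenever $e = uv$. Commutativity of $+$ is what makes this assignment well defined, independently of how we name the two endpoints of $e$; I would use this silently. What has to be shown is that every automorphism preserving $c$ also preserves $\bar c$, i.e.\ that $\operatorname{Stab}(c) \subseteq \operatorname{Stab}(\bar c)$, which by definition is exactly the statement that $c$ is less symmetric than $\bar c$.

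So let $\gamma \in \Aut G$ be an automorphism preserving $c$, meaning $c(\gamma v) = c(v)$ for all $v \in V$. First I would recall that, $\gamma$ being a graph automorphism, it carries the edge $e = uv$ to the edge with endpoints $\gamma u$ and $\gamma v$; that is, the action of $\gamma$ on edges is the one induced by its action on vertices. Then for an arbitrary edge $e = uv \in E$ we compute
\[
\bar c(\gamma e) = c(\gamma u) + c(\gamma v) = c(u) + c(v) = \bar c(e),
\]
where the middle equality is precisely the hypothesis that $\gamma$ preserves $c$. Since $e$ was arbitrary, $\gamma$ preserves $\bar c$, so $\gamma \in \operatorname{Stab}(\bar c)$.

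As $\gamma$ was an arbitrary element of $\operatorname{Stab}(c)$, this gives $\operatorname{Stab}(c) \subseteq \operatorname{Stab}(\bar c)$, completing the proof. The only two points that warrant a moment's attention are that an automorphism's action on edges is the one induced on pairs of endpoints, and that ``less symmetric'' is defined with the containment in this direction; neither is a genuine difficulty, so I expect the argument above to be essentially the whole proof.
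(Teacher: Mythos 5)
Your proof is correct and is essentially identical to the paper's: both verify directly that any $c$-preserving automorphism preserves the canonical edge colouring via the computation $c'(\gamma e) = c(\gamma u) + c(\gamma v) = c(u) + c(v) = c'(e)$. Your added remarks on well-definedness via commutativity and the direction of the stabiliser containment are fine but not needed beyond what the paper states.
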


\begin{proof}
We have to show that any automorphism $\gamma$ preserving the vertex colouring  $c$ also preserves the canonical edge colouring $c'$. \aed Let $e = uv$ be an edge, then 
\[
	c'(\gamma(e)) = c(\gamma(u)) + c(\gamma(v)) = c(u) + c(v) = c'(e).
\zed
\]
\end{proof}

\begin{lem}
\label{lem:nofixedpoint}
Let $G$ be a connected graph, let $c$ be a vertex colouring of $G$ and let $c'$ be the corresponding canonical edge colouring. If an automorphism $\gamma$ preserves $c'$ and \aed $c(v) = c(\gamma v)$ \zed for some vertex $v$, then $\gamma$ preserves $c$.
\end{lem}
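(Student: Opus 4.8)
The plan is to propagate the single equality $c(\gamma v)=c(v)$ to every vertex of $G$, using connectedness together with the cancellation law in the abelian group $C$ in which $c$ takes its values. Recall that the canonical edge colouring is given by $c'(uw)=c(u)+c(w)$ for $uw\in E$.

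First I would isolate the local step: if $u$ and $w$ are adjacent vertices with $c(\gamma u)=c(u)$, then $c(\gamma w)=c(w)$. Indeed, since $\gamma$ is an automorphism, $\gamma u\,\gamma w$ is again an edge, and since $\gamma$ preserves $c'$ we get
\[
	c(\gamma u)+c(\gamma w)=c'(\gamma u\,\gamma w)=c'(uw)=c(u)+c(w).
\]
Subtracting $c(\gamma u)=c(u)$ from both sides (legitimate in a group) yields $c(\gamma w)=c(w)$.

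Then I would conclude by induction along paths. Given an arbitrary vertex $x$, connectedness of $G$ provides a path $v=x_0,x_1,\dots,x_n=x$. The base case $c(\gamma x_0)=c(\gamma v)=c(v)=c(x_0)$ is exactly the hypothesis, and the local step above carries the equality from $x_i$ to $x_{i+1}$ for each $i$. Hence $c(\gamma x)=c(x)$, and since $x$ was arbitrary, $\gamma$ preserves $c$.

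I do not expect a genuine obstacle here; the only points worth flagging explicitly are that the argument relies on the cancellation property of $C$ (available because $C$ was endowed with an abelian group structure) and that connectedness is essential, since the colour of $v$ constrains only the behaviour of $\gamma$ on the component of $v$.
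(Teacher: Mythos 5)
Your proposal is correct and follows essentially the same route as the paper: propagate the equality $c(\gamma v)=c(v)$ to neighbours via $c(\gamma u)+c(\gamma w)=c'(\gamma(uw))=c'(uw)=c(u)+c(w)$ and cancellation in the abelian group, then extend to all of $G$ by induction using connectedness. The paper phrases the induction over the distance from $v$ rather than along a chosen path, but this is the same argument.
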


\begin{proof}
Let $u$ be a neighbour of $v$ and let $e = uv$. Then $c(\gamma u) = c'(\gamma e)- c(\gamma v) = c'(e)-c(v) = c(u)$. Now use induction over the distance from $v$ to show that the same is true for every vertex of $G$.
\end{proof}

\begin{lem}
\label{lem:changecolouring}
Let $G$ be a connected graph, let $c$ be a vertex colouring and let $c'$ be the corresponding canonical edge colouring. Let $c''$ be an edge colouring with the following properties
\begin{itemize}
\item there is at least one edge $e$ such that $c''(e) \neq c'(e)$, and
\item for every edge $e=uv$ with $c''(e) \neq c'(e)$ and every $\gamma$ which preserves $c''$ we have $c(u) = c(\gamma u)$ and $c(v) = c(\gamma v)$.
\end{itemize}
Then $c''$ is less symmetric than $c$.
\end{lem}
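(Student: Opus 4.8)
The plan is to show that every automorphism $\gamma$ in the stabiliser of $c''$ also lies in the stabiliser of $c$. The strategy is two-staged: first establish that such a $\gamma$ preserves the canonical edge colouring $c'$, and then invoke Lemma~\ref{lem:nofixedpoint} to upgrade this to preservation of $c$ itself. The conceptual point is that Lemma~\ref{lem:nofixedpoint} is precisely the tool converting ``preserves $c'$ and fixes one vertex colour'' into ``preserves $c$'', so once the first stage is done the rest is immediate.

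For the first stage, let $\gamma$ preserve $c''$ and let $F$ be the set of edges on which $c''$ and $c'$ disagree; by the first hypothesis $F \neq \emptyset$, though we only need nonemptiness later. Fix an edge $e = uv$ and argue by cases. If neither $e$ nor $\gamma e$ lies in $F$, then $c'(\gamma e) = c''(\gamma e) = c''(e) = c'(e)$, using that $\gamma$ preserves $c''$. If $e \in F$, apply the second hypothesis to the edge $e$ and the automorphism $\gamma$ to obtain $c(\gamma u) = c(u)$ and $c(\gamma v) = c(v)$, whence $c'(\gamma e) = c(\gamma u) + c(\gamma v) = c(u) + c(v) = c'(e)$. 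Finally, if $e \notin F$ but $\gamma e \in F$, apply the second hypothesis to the edge $\gamma e = (\gamma u)(\gamma v)$ and to the automorphism $\gamma^{-1}$, which also preserves $c''$; this again yields $c(\gamma u) = c(u)$ and $c(\gamma v) = c(v)$, and the same computation gives $c'(\gamma e) = c'(e)$. Thus $\gamma$ preserves $c'$.

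For the second stage, pick any edge $e = uv \in F$ (possible by the first hypothesis). By the second hypothesis applied with $\gamma$ we have $c(\gamma u) = c(u)$, so $\gamma$ fixes the colour of the vertex $u$. Since $G$ is connected and $\gamma$ preserves $c'$, Lemma~\ref{lem:nofixedpoint} shows $\gamma$ preserves $c$. As $\gamma$ was an arbitrary element of the stabiliser of $c''$, this stabiliser is contained in the stabiliser of $c$, i.e.\ $c''$ is less symmetric than $c$.

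I do not anticipate a real obstacle. The only mild subtlety is the third case of the case analysis: $F$ need not be $\gamma$-invariant \emph{a priori} (it is, \emph{a posteriori}, once we know $\gamma$ preserves both $c'$ and $c''$), so when $\gamma e \in F$ but $e \notin F$ one must feed the hypothesis the edge $\gamma e$ together with $\gamma^{-1}$ rather than $e$ together with $\gamma$. Everything else is a routine unwinding of the definition of the canonical colouring $c'(uv) = c(u) + c(v)$.
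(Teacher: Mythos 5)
Your proof is correct and follows essentially the same two-stage argument as the paper: first show that a $c''$-preserving automorphism preserves the canonical colouring $c'$ via the same case analysis (including the $\gamma^{-1}$ trick for the case $\gamma e \in F$, $e \notin F$), then use the second hypothesis to produce a vertex whose colour is preserved and apply Lemma~\ref{lem:nofixedpoint}. No gaps; your explicit handling of the third case is just a more detailed write-up of the paper's ``analogous argument with $\gamma^{-1}$'' remark.
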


\begin{proof}
Let $\gamma$ be an automorphism which preserves $c''$. First we show that $\gamma$ preserves $c'$, that is, $c'(e) = c'(\gamma e)$ for every edge $e = uv$. If $c''(e) = c'(e)$ and $c''(\gamma e) = c'(\gamma e)$ then this follows from the fact that $\gamma$ preserves $c''$. If $c''(e) \neq c'(e)$, then $c'(\gamma e) = c(\gamma u) + c(\gamma v) = c(u) + c(v) = c'(e)$. If $c''(\gamma e) \neq c'(\gamma e)$ we can use an analogous argument replacing $\gamma$ by $\gamma^{-1}$.

So every automorphism which preserves $c''$ also preserves $c'$. Note that from the two conditions it is clear that there is a vertex $v$ such that $c(v) = c(\gamma v)$. Hence we can apply Lemma \ref{lem:nofixedpoint} to conclude that every automorphism which preserves $c''$ also preserves $c$ whence $c''$ is less symmetric than $c$.
\end{proof}

\section{Proof of the main results}

In this section we prove Theorems \bed \ref{thm:vertextoedgecolouringinfinite}, \zed \ref{thm:vertextoedgecolouring}, and \ref{thm:edgetovertexcolouring}. The \bed proofs of Theorems \ref{thm:vertextoedgecolouringinfinite} and \ref{thm:edgetovertexcolouring} are simple applications of results \zed from the previous section. Before giving the proofs we recall the statements of the theorems.

\bed
\begingroup
\def\thethm{\ref{thm:vertextoedgecolouringinfinite}}
\begin{thm} 
Let $G$ be a connected graph and let $c$ be a $k$-vertex colouring of $G$ where $k$ is an infinite cardinal. Then there is a $k$-edge colouring $c'$ of $G$ which is less symmetric than $c$.
\end{thm}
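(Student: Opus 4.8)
The plan is to start from the canonical edge colouring $c'$ associated to $c$ (via some fixed Abelian group structure on the colour set $C$, which exists by the Axiom of Choice), and then argue that $c'$ already works, or modify it slightly. By Proposition~\ref{prp:preservecanonical} the canonical edge colouring is automatically less symmetric than $c$, so if $c'$ uses exactly $k$ colours we are done immediately. The only possible obstruction is that $c'$ might use fewer than $k$ colours (it clearly cannot use more, since each edge colour is a sum of two elements of $C$ and $|C|=k$ is infinite so the image still has size at most $k$; in fact we want it to be exactly $k$).

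To fix the number of colours I would use the freedom afforded by Lemma~\ref{lem:changecolouring}. Pick any edge $e_0 = u_0 v_0$ of $G$; since $G$ is connected and $c$ is a genuine $k$-colouring with $k$ infinite, $G$ has infinitely many vertices, so such an edge exists. Now recolour $e_0$ using a fresh palette: for each colour $\lambda \in C$ not already appearing on $c'$, we would like to be able to assign $e_0$ the colour $\lambda$. More precisely, define $c''$ to agree with $c'$ on every edge except $e_0$, and on $e_0$ give it a colour chosen so that the total number of colours used becomes exactly $k$. Since $C$ has cardinality $k$ and $c'$ uses at most $k$ colours, while we only need one extra edge to absorb the deficiency, a small combinatorial adjustment suffices: if $c'$ already uses $k$ colours, set $c'' = c'$ and do nothing; otherwise we need to enlarge the palette, which requires recolouring more than one edge. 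The cleaner route is: let $m = |c'(E)|$; if $m = k$ take $c'' = c'$. If $m < k$, note that $m < k$ forces $k$ to be infinite and $|E| = k$ (as $|E|\ge |V|-1$ type bounds and $G$ connected with $|V|$ infinite give $|E| = |V| = k$), so we may choose a set $F \subseteq E$ of $k$ edges and a bijection onto the $k$ colours; but to stay less symmetric we cannot recolour freely. So instead I would only recolour edges incident to a single vertex $v_0$ whose colour is "rigid enough", or better: recolour a single edge $e_0$ to a brand-new colour $\lambda_0 \notin c'(E)$. This raises the count to $m+1$, and iterating transfinitely (recolouring $e_\alpha$ to $\lambda_\alpha$ for $\alpha < k$ along a suitable enumeration) raises it to $k$.

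The key point making each such recolouring legitimate is Lemma~\ref{lem:changecolouring}: for every edge $e = uv$ where $c''$ differs from $c'$ and every $\gamma$ preserving $c''$, we must verify $c(u) = c(\gamma u)$ and $c(v) = c(\gamma v)$. This is where I would invest the real work. The idea is to choose the recoloured edges and their new colours so that each recoloured edge receives a colour that occurs on no other edge of $c''$; then any $\gamma$ preserving $c''$ must fix each such edge $e$ setwise, hence fixes $\{u,v\}$ setwise. To upgrade "$\gamma$ fixes $\{u,v\}$" to "$\gamma$ fixes the colours $c(u)$ and $c(v)$", one further arranges that within each recoloured edge the two endpoints either have the same $c$-colour, or are distinguished by some colour-unique incident structure; for instance, if $c(u) = c(v)$ there is nothing to prove, and if $c(u) \neq c(v)$ we give $e$ and a second incident edge at $u$ distinct fresh colours so that $\gamma$ cannot swap $u$ and $v$.

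The main obstacle I anticipate is precisely this last bookkeeping: ensuring simultaneously that (i) every recoloured edge gets a colour unique among all edges of $c''$, (ii) the endpoints of recoloured edges cannot be swapped by a $c''$-preserving automorphism when their $c$-colours differ, and (iii) after all $k$ recolourings the total number of distinct edge colours is exactly $k$, not more. Since $k$ is infinite, cardinal arithmetic is forgiving — a transfinite enumeration of length $k$ using fresh colours keeps the count at $k$ — so the essential difficulty is local and structural rather than cardinal: one must be careful when $G$ has an automorphism swapping the two endpoints of the chosen edges. Choosing the edges greedily from a spanning tree of $G$, oriented away from a fixed starting vertex, and breaking each potential endpoint-swap with an auxiliary fresh colour on a neighbouring edge, should resolve this cleanly.
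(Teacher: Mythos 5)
Your opening step reverses the direction of Proposition~\ref{prp:preservecanonical}. That proposition says that $c$ is less symmetric than its canonical edge colouring $c'$, i.e.\ every automorphism preserving $c$ preserves $c'$; it does \emph{not} say that every automorphism preserving $c'$ preserves $c$, which is what the theorem requires. The canonical edge colouring is in general strictly more symmetric than $c$: for a single edge $uv$ with $c(u)=0$, $c(v)=1$ the canonical colouring gives the edge colour $1$, and the automorphism swapping $u$ and $v$ preserves it but not $c$. So your claim ``if $c'$ uses exactly $k$ colours we are done immediately'' is false, and the real content of the theorem is precisely the symmetry-breaking modification, not the count of colours. Relatedly, the concern that drives most of your proposal --- forcing the number of colours actually used up to $k$ by a transfinite sequence of recolourings --- is a red herring under the paper's definition (a $k$-colouring is a map into a colour set of cardinality $k$, with no surjectivity requirement), and in any case your iteration is left unproved: Lemma~\ref{lem:changecolouring} demands the endpoint condition at \emph{every} modified edge, and your scheme never pins down how all $k$ recoloured edges simultaneously avoid endpoint swaps.

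The idea you reach only at the very end --- give an edge and a second edge incident to a common vertex two brand-new colours so that a colour-preserving automorphism must fix both edges and hence all three endpoints --- is in fact the entire proof in the paper: take the canonical edge colouring, pick a vertex $v$ with at least two neighbours, recolour two edges at $v$ with fresh colours $a,b\notin C$, note that $|C\cup\{a,b\}|=k$ since $k$ is infinite, and apply Lemma~\ref{lem:changecolouring} (the recoloured edges are fixed, so their endpoints are fixed pointwise, giving the required condition). If you discard the incorrect appeal to Proposition~\ref{prp:preservecanonical} and the transfinite recolouring, and promote that final remark to the argument itself, you recover the paper's proof; as written, however, the proposal rests on a wrong premise and does not close the gap it opens.
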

\addtocounter{thm}{-1}
\endgroup
\begin{proof}
Let $C$ be the set of colours used by $c$ and let $a, b \notin C$. Since $C$ is infinite, we have that $|C \cup \{a,b\}| = |C|$. Let $c'$ be the canonical edge colouring corresponding to $c$ and let $v$ be a vertex with at least two neighbours. Let $c''$ be the edge colouring obtained from $c'$ by changing the colours of two edges incident to $v$ to $a$ and $b$ respectively. It is easy to see that if an automorphism preserves $c''$ then it must fix the two recoloured edges, and consequently $c''$ satisfies the conditions of Lemma \ref{lem:changecolouring}.
\end{proof}
\zed

\begingroup
\def\thethm{\ref{thm:edgetovertexcolouring}}
\begin{thm} \aed
Let $T$ be a tree and let $c'$ be an arbitrary $k$-edge colouring of $T$. Then there is a $k$-vertex colouring $c$ which is less symmetric than $c'$. \zed
\end{thm}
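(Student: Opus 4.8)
The plan is to turn the edge colouring $c'$ into a vertex colouring by recording, at each vertex, the colour of a canonically chosen incident edge. Recall that a finite tree, and more generally a rayless tree, has a well-defined centre --- either a single vertex, or two adjacent centres joined by a \emph{central edge} --- which is preserved by every automorphism of $T$. Suppose first that $T$ is rayless and unicentred with centre $r$. Each $v \neq r$ has a unique \emph{parent edge} $e_v$, namely the first edge on the path from $v$ to $r$, and $v \mapsto e_v$ is a bijection from $V \setminus \{r\}$ onto $E$. Define $c(v) := c'(e_v)$ for $v \neq r$ and let $c(r)$ be arbitrary; then $c$ takes values in the colour set of $c'$, so it is a $k$-vertex colouring. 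Since every automorphism fixes $r$, an automorphism $\gamma$ preserving $c$ satisfies $\gamma e_v = e_{\gamma v}$ for all $v \neq r$, whence $c'(\gamma e_v) = c'(e_{\gamma v}) = c(\gamma v) = c(v) = c'(e_v)$; as the edges $e_v$ exhaust $E$, this shows $\gamma$ preserves $c'$, so $c$ is less symmetric than $c'$.

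If instead $T$ is rayless and bicentred with central edge $ab$, I would delete $ab$ to obtain components $T_a \ni a$ and $T_b \ni b$, root $T_a$ at $a$ and $T_b$ at $b$, set $c(v) := c'(e_v)$ for $v \notin \{a,b\}$ (the parent edge taken inside the component of $v$), and set $c(a) := c(b) := c'(ab)$. The parent edges together with $ab$ still exhaust $E$, so $c$ is a $k$-vertex colouring. Every automorphism fixes $ab$ setwise, so a $c$-preserving $\gamma$ either fixes both $a$ and $b$ --- and then the argument above applies verbatim --- or it interchanges $a$ and $b$, in which case it maps $T_a$ isomorphically onto $T_b$, carrying the $a$-rooted structure to the $b$-rooted structure. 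I expect this last point to be where the real care is needed: one has to check that swapping the two halves still respects the parent-edge correspondence $\gamma e_v = e_{\gamma v}$. Granting this, together with $\gamma(ab) = ab$, one again obtains $c'(\gamma e) = c'(e)$ for every edge $e$, so $c$ is less symmetric than $c'$.

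Finally, a tree with a ray need not have a canonical root, so the construction has to be supplemented. If $c'$ does not use all $k$ colours, I would root $T$ at an arbitrary vertex $r$, colour each $v \neq r$ by $c'(e_v)$, and give $r$ a colour missing from $c'$; then $r$ is the unique vertex of that colour, hence fixed by every $c$-preserving automorphism, and the unicentred argument goes through. The remaining case --- a tree with a ray whose edge colouring is onto all $k$ colours --- looks the most delicate: rooting at an arbitrary $r$ and analysing a $c$-preserving $\gamma$ with $\gamma r \neq r$, one can show that $\gamma$ automatically preserves $c'$ on every edge not lying on the finite path from $r$ to $\gamma^{-1} r$, and the remaining work is to choose $c(r)$ so that these ``defect'' edges are forced to transform correctly as well.
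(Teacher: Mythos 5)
Your rayless cases (unicentred and bicentred) and the ``missing colour'' case are fine, but the proof is not complete: the final case -- a tree containing a ray whose edge colouring uses all $k$ colours -- is exactly where the construction stops being a proof and becomes a hope. Recording the raw colour of the parent edge only works because every automorphism preserving $c$ can be forced to fix the root, which is what makes $\gamma e_v = e_{\gamma v}$ hold; once no vertex is canonically fixed (double rays, regular trees, etc.) a $c$-preserving $\gamma$ may move $r$, and your own analysis shows the parent-edge correspondence then fails along the path from $r$ to $\gamma r$. You propose to repair this by a clever choice of the single value $c(r)$, but there may be infinitely many candidate automorphisms moving $r$ (for instance all translations and edge-reflections of a double ray), each with its own ``defect'' path, and you give no argument that one colour choice among the $k$ available can control them all; ``the remaining work is to choose $c(r)$ so that these defect edges are forced to transform correctly'' is precisely the unproven step. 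As it stands the theorem is established only for rayless trees and for colourings that omit a colour.

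The missing idea is to make the vertex colouring root-independent rather than to look for a fixable root. The paper endows the colour set with an Abelian group structure (for finite $k$ take $\mathbb{Z}_k$; in general use the Axiom of Choice), fixes an arbitrary vertex $v$ with $c(v)=0$, and ``integrates'' the edge colouring outward: $c(u) := c'(uw) - c(w)$ where $w$ is the neighbour of $u$ towards $v$ (Lemma~\ref{lem:edgetovertexcolouring}). Then $c'$ is exactly the canonical edge colouring $uv \mapsto c(u)+c(v)$ of $c$, and Proposition~\ref{prp:preservecanonical} gives at once that every automorphism preserving $c$ preserves $c'$ -- the relation ``$c'$ is the canonical edge colouring of $c$'' does not mention the base vertex, so no automorphism needs to fix anything, and there is no case distinction by rays, centres, or surjectivity of $c'$. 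Your parent-edge colouring is in effect the non-telescoped version of this; replacing $c(v)=c'(e_v)$ by the alternating/group-valued sum along the path to the base vertex is what removes the dependence on a canonical root and closes your remaining case.
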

\addtocounter{thm}{-1}
\endgroup

The theorem follows from Proposition \ref{prp:preservecanonical} and the following lemma.

\begin{lem}
\label{lem:edgetovertexcolouring}
Let $T$ be a tree, let $c'$ be an edge colouring of $T$, and let $v$ be an arbitrary vertex. Then there is a unique vertex colouring $c$ such that $c(v) = 0$ and $c'$ is the canonical edge colouring corresponding to $c$.
\end{lem}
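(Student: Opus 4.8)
The plan is to use the tree structure to define $c$ by working outward from $v$ along the unique paths to all other vertices. Since $T$ is a tree, for every vertex $w$ there is a unique path $v = x_0, x_1, \ldots, x_n = w$ from $v$ to $w$. The canonical edge colouring requires $c'(x_{i-1}x_i) = c(x_{i-1}) + c(x_i)$, which we rearrange to $c(x_i) = c'(x_{i-1}x_i) - c(x_{i-1})$. Starting from $c(v) = 0$, this recursion determines $c(x_i)$ for every $i$ along the path, and in particular gives an explicit formula $c(w) = \sum_{i=1}^n (-1)^{n-i} c'(x_{i-1}x_i)$.

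First I would verify that this definition is well-posed. Because $T$ is a tree, the path from $v$ to any vertex $w$ is unique, so there is no ambiguity in the formula above; moreover, if $w' = x_{n-1}$ is the penultimate vertex on the path to $w$, then the path to $w$ extends the path to $w'$ by the single edge $w'w$, so the recursion is consistent and $c$ is a genuine function $V \to C$ (after embedding $C$ in an Abelian group as in the construction of canonical colourings). Next I would check that $c'$ is indeed the canonical edge colouring of $c$: for an arbitrary edge $e = w'w$, exactly one of $w', w$ is the parent of the other in the BFS tree rooted at $v$ — say $w$ is the child — and then by construction $c(w) = c'(w'w) - c(w')$, which rearranges to $c(w') + c(w) = c'(e)$, as required.

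For uniqueness, suppose $\tilde c$ is any vertex colouring with $\tilde c(v) = 0$ whose canonical edge colouring is $c'$. Then along any path $v = x_0, \ldots, x_n = w$ the canonical condition forces $\tilde c(x_i) = c'(x_{i-1}x_i) - \tilde c(x_{i-1})$ for each $i$, so by induction on $n$ (the distance from $v$) we get $\tilde c(w) = c(w)$ for every $w$; hence $\tilde c = c$. This is really the only place where connectedness and acyclicity are both needed, and no step presents a genuine obstacle — the main point is simply that in a tree the ``no consistency constraint around cycles'' issue that would arise in a general graph does not occur, so the outward recursion both terminates at a well-defined value and is forced. Combining this lemma with Proposition~\ref{prp:preservecanonical} then yields Theorem~\ref{thm:edgetovertexcolouring}, since the colouring $c$ produced uses at most $k$ colours (it takes values in the subgroup generated by the $k$ colours of $c'$, but one must be slightly careful: the number of colours actually appearing in $c$ is at most the number of vertices, and one should note that in the relevant application $k$ colours suffice — in fact the construction can be arranged so that $c$ uses no more colours than $c'$ by choosing the group structure appropriately, e.g.\ $\mathbb{Z}_2$ when $k = 2$).
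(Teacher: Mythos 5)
Your proof is correct and is essentially the same argument as the paper's: you define $c$ recursively outward from $v$ (your explicit alternating-sum formula along the unique $v$--$w$ path is just the unrolled form of the paper's induction on distance), and uniqueness is forced by the same recursion. The closing remark about the number of colours concerns the application to Theorem~\ref{thm:edgetovertexcolouring} rather than the lemma itself, and is handled exactly as you suggest, by taking the colour set to be $\mathbb{Z}_k$.
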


\begin{proof}
We inductively construct the colouring $c$ and (simultaneously) show that it is unique.
First, define $c(v) := 0$ and suppose $\bar{c}$ that is some other vertex colouring satisfying the conditions of the lemma, with $\bar{c}(v) = 0$ and $c'$ the canonical edge colouring corresponding to $\bar{c}$.

Inductively, suppose we have defined $c$ on all vertices up to distance $r$ from $v$, and that $c$ and $\bar{c}$ agree on these vertices. Now every vertex $u$ at distance $r+1$ from $v$ has a unique neighbour $w$ at distance $r$ from $v$, and we define $c(u) := c'(uw) - c(w)$. However, the colouring $\bar{c}$ cannot have $c'$ as its canonical edge colouring unless it also assigns to $u$ the colour $c'(uw) - c(w)$. Whence $c$ and $\bar{c}$ agree on vertices up to distance $r+1$ from $v$. By induction, we have constructed $c$ and shown that $c$ is unique.
\end{proof}

Besides Theorem \ref{thm:edgetovertexcolouring}, the above lemma has another implication which will be useful later.

\begin{lem}
\label{lem:colouringbijection}
Let $T$ be a tree and let $\varphi$ be the function which maps to every vertex colouring its canonical edge colouring.
\begin{enumerate}
\item If $T$ is rooted/unicentred, then $\varphi$ is a bijection between vertex colourings in which the root/centre receives colour $0$ and edge colourings.
\item If $T$ is bicentred, then $\varphi$ is a bijection between vertex colourings in which both centres receive colour $0$ and edge colourings in which the central edge has colour $0$.
\end{enumerate}
In both cases, $\varphi$ preserves stabilisers, i.e.\ the vertex colourings to which we restrict and their corresponding edge colourings have the same stabiliser.
\end{lem}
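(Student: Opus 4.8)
The plan is to read off both bijections directly from Lemma~\ref{lem:edgetovertexcolouring}, and then to obtain ``$\varphi$ preserves stabilisers'' by combining Proposition~\ref{prp:preservecanonical} with Lemma~\ref{lem:nofixedpoint}.

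For~(1) I would take $v$ to be the root, respectively the centre, of $T$. Lemma~\ref{lem:edgetovertexcolouring} says precisely that for every edge colouring $c'$ there is exactly one vertex colouring $c$ with $c(v)=0$ whose canonical edge colouring is $c'$: the existence part says that $\varphi$, restricted to the vertex colourings assigning colour $0$ to $v$, is onto the set of all edge colourings, and the uniqueness part says it is injective there. Hence $\varphi$ restricts to a bijection as claimed.

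For~(2) write $u_1,u_2$ for the two centres and $e_0=u_1u_2$ for the central edge. If $c(u_1)=c(u_2)=0$ then $\varphi(c)(e_0)=c(u_1)+c(u_2)=0$, so $\varphi$ maps the prescribed set of vertex colourings into the set of edge colourings with $c'(e_0)=0$. Injectivity again follows from the uniqueness part of Lemma~\ref{lem:edgetovertexcolouring} applied with $v=u_1$, since every colouring in the prescribed domain in particular assigns $0$ to $u_1$. For surjectivity, given an edge colouring $c'$ with $c'(e_0)=0$, Lemma~\ref{lem:edgetovertexcolouring} with $v=u_1$ produces a vertex colouring $c$ with $c(u_1)=0$ and $\varphi(c)=c'$; then $c(u_2)=c'(e_0)-c(u_1)=0$, so $c$ lies in the prescribed domain.

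It remains to check that for $c$ in the relevant domain and $c'=\varphi(c)$, the stabiliser of $c$ equals the stabiliser of $c'$. The inclusion ``the stabiliser of $c$ is contained in the stabiliser of $c'$'' is exactly Proposition~\ref{prp:preservecanonical}. For the reverse inclusion let $\gamma$ preserve $c'$. In the rooted, respectively unicentred, case $\gamma$ fixes $v$ (the root, resp.\ the centre), so $c(\gamma v)=c(v)$ trivially; in the bicentred case $\gamma$ fixes the central edge $e_0$, hence permutes $\{u_1,u_2\}$, and since $c(u_1)=c(u_2)=0$ we again get $c(\gamma u_1)=c(u_1)$. In either case Lemma~\ref{lem:nofixedpoint} shows $\gamma$ preserves $c$, as required. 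I do not expect a real obstacle: all the substantive work is already done in Lemmas~\ref{lem:edgetovertexcolouring} and~\ref{lem:nofixedpoint}, and the only points needing care are the verification in the bicentred case that the recovered colouring really assigns $0$ to the second centre (which is where the hypothesis $c'(e_0)=0$ enters) and the observation that the reverse stabiliser inclusion may be checked against every automorphism of $T$, since each of these fixes the centre, respectively the central edge.
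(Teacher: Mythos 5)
Your proof is correct and takes essentially the same route as the paper's: bijectivity is read off from Lemma~\ref{lem:edgetovertexcolouring}, and stabiliser preservation comes from Proposition~\ref{prp:preservecanonical} together with Lemma~\ref{lem:nofixedpoint} applied at the centre (or a centre of the central edge), which every automorphism fixes. The paper's proof is simply a terser version of yours; your explicit verification that in the bicentred case the recovered colouring assigns $0$ to the second centre exactly when the central edge has colour $0$ is a detail the paper leaves implicit.
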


\begin{proof}
Lemma \ref{lem:edgetovertexcolouring} implies that the maps are bijective. By Proposition \ref{prp:preservecanonical}, any vertex colouring $c$ is less symmetric than $\varphi(c)$. Conversely, since any automorphism must map centres to centres, Lemma \ref{lem:nofixedpoint} implies that for the colourings satisfying the condition $\varphi(c)$ is less symmetric than $c$, hence $\varphi$ preserves stabilisers as claimed.
\end{proof}

The remainder of the section is devoted to the proof of Theorem~\ref{thm:vertextoedgecolouring}. While the proof is divided up into several cases, each of the cases follows the same rough idea: if the canonical edge colouring corresponding to $c$ is not yet less symmetric, apply some minor modifications to it in order to obtain less symmetric colouring. Except in the case where $G$ is a tree, these modifications will only affect a finite number of edges making them easy to track.

\begingroup
\def\thethm{\ref{thm:vertextoedgecolouring}}
\begin{thm}
Let $G$ be a connected graph and let $c$ be an arbitrary $k$-vertex colouring of $G$ \bed with $k \in \mathbb N$\zed . If $G$ is not a bicentred tree, then there is an $k$-edge colouring $c'$ which is less symmetric than $c$.
\end{thm}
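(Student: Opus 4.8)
The plan is to start from the canonical edge colouring $c'$ corresponding to $c$ (after endowing the colour set $C = \{1,\ldots,k\}$, or rather $\mathbb{Z}_k$, with an Abelian group structure) and, in each case, perturb $c'$ on a small set of edges so that the hypotheses of Lemma~\ref{lem:changecolouring} become satisfied. By Proposition~\ref{prp:preservecanonical}, $c'$ itself is already less symmetric than $c$; the only issue is that $c'$ may use a colour outside $\{1,\ldots,k\}$ if we recolour an edge with a fresh colour $a$, so every modification must stay within the $k$ available colours. Thus the real content is: for each edge $e=uv$ whose colour we change, we must guarantee that every automorphism $\gamma$ fixing the new colouring $c''$ satisfies $c(u)=c(\gamma u)$ and $c(v)=c(\gamma v)$ — typically by arranging that $\gamma$ must fix $e$ setwise (hence $\{u,v\}$), and then that it fixes $u$ and $v$ individually or at least preserves their $c$-colours.

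I would split into cases according to the structure of $G$. \emph{Case 1: $G$ has a vertex $v$ of degree at least $3$ (or more generally $G$ is not a path or cycle and, being not a bicentred tree, has enough room).} Pick $v$ and recolour a couple of edges at $v$ so that the multiset of colours on edges incident to $v$ becomes ``rigid'' — e.g. give two of them equal, otherwise-unused-at-$v$ colours, or give one edge a colour that appears nowhere else at $v$, forcing any colour-preserving $\gamma$ to fix that edge. Here one must be slightly careful to keep the total palette size at $k$; with $k$ colours and finitely many edges to adjust this is a bookkeeping argument, choosing colours that are ``locally fresh'' rather than globally fresh. \emph{Case 2: $G$ is a cycle.} Then $\Aut G$ is dihedral and one can directly recolour one or two edges to kill all nontrivial symmetries while respecting $c$ via Lemma~\ref{lem:changecolouring}; small cycles may need ad hoc checking. \emph{Case 3: $G$ is a finite path.} Similar direct argument. \emph{Case 4: $G$ is a tree but not a bicentred tree}, i.e. $G$ is a unicentred tree (finite, or the rayless case via the kernel, or an infinite tree with a fixed vertex structure). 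Here Lemma~\ref{lem:colouringbijection}(1) does the heavy lifting: $\varphi$ is a bijection between vertex colourings in which the centre gets colour $0$ and all edge colourings, preserving stabilisers, so $c'$ already has the same stabiliser as the colouring that agrees with $c$ up to the global shift sending $c(\text{centre})$ to $0$; a shift of a $k$-vertex colouring is still a $k$-vertex colouring with the same stabiliser, so $c'$ is a $k$-edge colouring less symmetric than $c$ with no modification needed. \emph{Case 5: $G$ is not a tree}, i.e. $G$ contains a cycle and has more than one vertex of interest: again use a degree-$\geq 3$ vertex if one exists, otherwise $G$ is a cycle with trees hanging off or a single cycle, reducing to earlier cases.

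The main obstacle I expect is the palette constraint in Case~1 combined with the possibility that $G$ is \emph{vertex-transitive} or highly symmetric, so that there is no ``locally distinguished'' edge to exploit: one must recolour enough edges around a chosen vertex $v$ to break the stabiliser of $v$'s incident edges, while (a) using only colours in $\{1,\ldots,k\}$ and (b) ensuring that every $\gamma$ preserving $c''$ not only fixes $v$ but also fixes the partner endpoints of the recoloured edges, so that Lemma~\ref{lem:changecolouring}'s second bullet holds. When $k$ is small (say $k=1$, so $c$ is trivial and its stabiliser is all of $\Aut G$, and we must produce an edge colouring with the \emph{same} stabiliser — here $c'$ is also monochromatic and we are done trivially; but $k=2$ with a symmetric $c$ is the delicate case) one has very little freedom, and the argument must lean on $G$ not being a bicentred tree to find a vertex whose incident edges can be made asymmetric. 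I would handle this by first reducing to the case where $c$ is not preserved by all of $\Aut G$ (otherwise any edge colouring with the full automorphism group, e.g. a canonical one, works), and then showing that a single well-chosen vertex of degree $\geq 3$, or the structure of a cycle/path, always suffices; the bicentred-tree exception is exactly the configuration where every such local modification can be ``undone'' by the central-edge-swapping automorphism, which is why it must be excluded.
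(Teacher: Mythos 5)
Your overall strategy (perturb the canonical edge colouring and invoke Lemma~\ref{lem:changecolouring}, treating unicentred trees via Lemma~\ref{lem:colouringbijection} and discarding the trivial case where $c$ is preserved by all of $\Aut G$) matches the paper's opening moves, but the proposal stops exactly where the real proof begins. The step you flag as ``the main obstacle'' --- a highly symmetric graph with a small palette, say $k=2$ --- is not resolved by making the multiset of edge colours at a single vertex $v$ ``locally rigid'': with only $k$ colours there may be no way to make $v$'s local data unique among all vertices, so an automorphism preserving your modified colouring need not fix $v$ at all; and even if it fixes $v$, Lemma~\ref{lem:changecolouring} demands that the $c$-colours of \emph{both} endpoints of every recoloured edge be preserved, which a purely local argument does not give. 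The paper's solution is global, and its two key ingredients are absent from your sketch: first, the alternating-sum invariants along closed walks (if $W=e_1\dots e_l$ is closed then $\sum (-1)^i c'(e_i)$ is $0$ for $l$ even and $-2c(v)$ for $l$ odd), which detect exactly which cycles are ``violated'' after adding $1$ to the colour of one edge and hence force automorphisms to fix specific cycles or edges setwise; second, the equivalence relation on edges lying in cycles ($e\sim f$ iff every cycle through $e$ contains $f$), whose classes (cycle classes, path classes of length $\geq 2$, path classes of length $1$) organise a genuinely intricate case analysis --- the last case alone needs the auxiliary recolourings $\tilde c$, the forest $G-E_P$, and a distance argument to locate two edges $e,f$ satisfying a rigidity condition. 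There is also a reduction to a prime number of colours (via $\mathbb Z_{p_1}\times\cdots\times\mathbb Z_{p_r}$) that the cycle arguments rely on and that your plan never mentions.

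A second gap: the theorem has no finiteness hypothesis on $G$, and your tree case only covers unicentred finite or rayless trees. For trees containing a double ray (e.g.\ the two-way infinite path) no vertex need be fixed by the automorphisms in question, so neither Lemma~\ref{lem:colouringbijection} nor Lemma~\ref{lem:nofixedpoint} applies directly; the paper handles this with a bespoke edge colouring of the spine (edges at odd distance from a root coloured $1$, edges at even distance $2n$ coloured by the $c$-colour of their ancestor at distance $n$), which encodes the vertex colouring into the edge colouring and is then combined with the canonical colouring off the spine. Trees with a ray but no double ray also need Halin's fixed-vertex result. Without these ingredients the proposal does not constitute a proof of the statement, only a correct identification of where the difficulty lies.
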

\addtocounter{thm}{-1}
\endgroup

\subsection{A prime number of colours is enough}

We first show that it is enough to prove Theorem~\ref{thm:vertextoedgecolouring} when the number of colours is a prime. Indeed, assume that the theorem was true for all primes and assume that we are given a colouring with $k = p_1p_2\cdots p_r$ colours. Then we can translate this colouring into a colouring $c$ with colours in $\mathbb Z_{p_1} \times \cdots \times \mathbb Z_{p_r}$. Let $c_i$ be the projection of this colouring onto the $\mathbb Z_{p_i}$ component. 

For every $i$ find an edge colouring $c_i'$ such that every automorphism preserving $c_i'$ also preserves $c_i$. Define $c' = c_1' \times \cdots \times c_r'$. Then an automorphism preserving $c'$ has to preserve every $c_i'$, hence it also preserves every $c_i$ and thus also $c$.

From now on, we will generally make the following assumption:
%essentially, the fact that $k$ is either odd or equal to $2$ helps to simplify some parts of the proof
\begin{ass} \label{ass:general}
The colouring $c$ uses colours in $\mathbb Z_p$ where $p$ is a prime.
\end{ass}

\subsection{The tree case}

\begin{clm}
Theorem~\ref{thm:vertextoedgecolouring} is true for trees.
\end{clm}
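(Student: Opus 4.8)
The plan is to use the reduction just made, so that throughout $c$ takes values in $\mathbb{Z}_p$ for a fixed prime $p$, and then to distinguish the case where $T$ is rayless from the case where $T$ contains a ray.

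\textbf{Rayless trees.} If $T$ is rayless then, since it is not bicentred, it is unicentred; write $z$ for its centre. As the kernel of a rayless tree is preserved by every automorphism and $z$ is its unique centre, every automorphism of $T$ fixes $z$. I claim the canonical edge colouring $c'$ attached to $c$ is already less symmetric than $c$: by Proposition~\ref{prp:preservecanonical} every automorphism preserving $c$ preserves $c'$, and conversely if $\gamma$ preserves $c'$ then $c(\gamma z)=c(z)$ (because $\gamma z=z$), so Lemma~\ref{lem:nofixedpoint} shows $\gamma$ preserves $c$. Hence $c'$ is a $p$-edge colouring with the same stabiliser as $c$. Since finite trees are rayless, this also settles the finite case.

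\textbf{Trees with a ray.} Now suppose $T$ contains a ray; fix one, $R=w_0w_1w_2\cdots$, with edges $f_i=w_iw_{i+1}$, and let $c'$ be the canonical edge colouring of $c$. The plan is to perturb $c'$ only along $R$. I would choose a sequence $(a_i)_{i\ge 0}$ with entries in $\mathbb{Z}_p$ that is \emph{rigid}, in the sense that all its tails $(a_i)_{i\ge n}$ are pairwise distinct; such sequences exist already for $p=2$ (e.g.\ $a_i=1$ precisely when $i$ is a power of $2$). Set $c''(f_i):=a_i$ and $c''(e):=c'(e)$ for $e\notin R$, arranging by perturbing one entry if necessary that $c''\neq c'$ on at least one edge. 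I would then show that every $\gamma$ preserving $c''$ fixes each $w_i$: first that $\gamma$ maps $R$ onto $R$, since $\gamma^{-1}(R)$ carries the colour sequence $(a_i)$ under $c''$ and rigidity forbids it being a parallel translate of $R$; and then, $\gamma$ permuting the edges $f_0,f_1,f_2,\dots$ while respecting which pairs share a vertex, it acts as an automorphism of the one-way infinite path $f_0f_1f_2\cdots$ in the line graph, which is rigid, so $\gamma$ fixes every $f_i$ and hence every $w_i$. Consequently $\gamma$ fixes $R$ pointwise, so it also preserves $c'$ (off $R$ the colourings agree and a bijection fixing every edge of $R$ cannot move a non-$R$ edge onto $R$), and since $c(\gamma w_i)=c(w_i)$ at the endpoints of every recoloured edge, Lemma~\ref{lem:changecolouring} (equivalently, Lemma~\ref{lem:nofixedpoint} applied at $w_0$) yields that $c''$ is a $p$-edge colouring less symmetric than $c$. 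In both cases the general value of $k$ follows from the product reduction already established.

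\textbf{The main obstacle.} The delicate point, and the step I expect to require the most work, is the claim that a $c''$-preserving automorphism cannot carry $R$ onto a genuinely different ray of $T$ coloured like $R$. Rigidity of $(a_i)$ rules out rays that eventually run along $R$ after a translate, but not on its own rays that re-enter $R$ through a branch vertex $w_m$ along an initial path whose \emph{canonical} colours happen to reproduce $a_0,\dots,a_{m-1}$; excluding this forces one to choose the pattern $(a_i)$ — and perhaps the ray $R$ itself — with the structure of $T$ in mind, and is where the argument really uses that $T$ is a tree. Everything else is a direct appeal to the lemmas of the previous section.
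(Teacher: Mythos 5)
Your rayless/unicentred case is correct and is essentially the paper's argument (the paper cites Lemma~\ref{lem:colouringbijection}, but the substance is the same: every automorphism fixes the centre, so Lemma~\ref{lem:nofixedpoint} applies to the canonical colouring). The gap is exactly where you flagged it, in the case where $T$ contains a ray, and it is not a technicality that a cleverer fixed pattern $(a_i)$ repairs: since every edge off $R$ keeps its \emph{canonical} colour, the colours along any ray you did not recolour are dictated by the adversarially chosen $c$, which can be made to spell out your pattern. Concretely, let $p=2$, let $T$ be the double ray $\cdots s_2s_1w_0w_1w_2\cdots$, take $R=w_0w_1w_2\cdots$, set $c(w_i)=0$ for all $i$, and choose $c(s_1):=a_0$, $c(s_{i+1}):=a_i-c(s_i)$, so that the canonical colour of $w_0s_1$ and of each $s_is_{i+1}$ equals $a_i$. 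Then $c''$ colours $w_iw_{i+1}$ with $a_i$ and the opposite side canonically with the same sequence, so the reflection fixing $w_0$ and swapping $w_i\leftrightarrow s_i$ preserves $c''$; but it does not preserve $c$, because the first nonzero $a_m$ forces $c(s_{m+1})=a_m\neq 0=c(w_{m+1})$. (No perturbation is triggered, since $c'$ is identically $0$ on $R$ while $(a_i)$ is not.) So $c''$ is not less symmetric than $c$, and the same trick defeats any recipe of this shape: the recoloured set is a single, non-canonical ray, and its colour pattern carries no information about $c$, so $c$ can plant an indistinguishable copy elsewhere.

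The paper avoids this in two ways, and your proof would need one of them. If $T$ has a ray but no double ray, it invokes Halin's theorem that every automorphism of such a tree fixes a vertex, after which the unmodified canonical colouring is already less symmetric by Lemma~\ref{lem:nofixedpoint}. If $T$ contains a double ray, it recolours not an arbitrary ray but the entire \emph{spine} $S$ (the subtree induced by all vertices on double rays), which is setwise invariant under every automorphism, and it recolours it with a colouring that encodes both a root $r$ (all edges at $r$ get colour $0$, edges at odd distance from $r$ get colour $1$) and the vertex colouring $c$ itself (an edge at even distance $2n$ gets the colour of its ``ancestor'' at distance $n$). Any automorphism preserving this colouring must fix $r$ and preserve $c$ on $S$; by Proposition~\ref{prp:preservecanonical} it then preserves the canonical colouring on $S$ and hence everywhere off $S$ as well, and Lemma~\ref{lem:nofixedpoint} propagates preservation of $c$ to all of $T$. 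The two features your construction lacks — recolouring a canonically invariant subgraph, and making the new colours record $c$ rather than a pattern chosen independently of $c$ — are precisely what blocks the look-alike ray; as written, the central step of your ray case is missing and, in the naive form, false.
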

\begin{proof}
If $T$ is a bicentred tree there is nothing to show. If $T$ has a unique central vertex, then Lemma~\ref{lem:colouringbijection} (potentially after swapping colours) implies Theorem~\ref{thm:vertextoedgecolouring}.

Hence we may assume that $T$ contains a ray. If $T$ does not contain a double ray, then by results from \cite{halin-automorphism_oneended} every automorphism must fix some vertex. We can again apply Lemma \ref{lem:nofixedpoint} to conclude that any automorphism preserving the canonical edge colouring also preserves $c$.

Finally assume that $T$ contains double rays. Define the \emph{spine} $S$ of $T$ as the subgraph induced by all vertices which lie on a double ray. Clearly, $S$ is setwise fixed by every automorphism of $T$. Observe that $S$ is a leafless tree. Pick an arbitrary root $r$ of $S$ and define an edge colouring $c_s'$ on $S$ as follows. 

Assign colour $0$ to all edges incident to $r$ and colour $1$ to all edges at odd distance from $r$ (i.e.\ edges connecting vertices at distance $2n-1$ \aed from $r$ \zed to vertices at distance $2n$). For an edge at even distance $2n$ from $r$ define the \emph{ancestor} of $e$ as the the unique vertex $v$ at distance $n$ from $r$ which lies on a path connecting $r$ to $e$. Now colour every edge at even distance from $r$ by the colour $c(v)$ of its ancestor.

We claim that any automorphism $\gamma$ which preserves $c_s'$ on $S$ also preserves $c$ on $S$. Indeed, any such $\gamma$ must fix $r$ because it is the only vertex for which all incident edges are coloured $0$. Now if $\gamma$ moves a vertex $v$ to a vertex $w$, then it also moves the edges whose ancestor is $v$ to edges whose ancestor is $w$. By definition of $c_s'$ this is only possible if $c(v) = c(w)$.

\aed Now consider the edge colouring $c''$, where $S$ is coloured according to $c_s'$ and the remaining edges according to the canonical edge colouring $c'$ of $c$. If $\gamma$ preserves $c''$ then it preserves $c$ on $S$ and therefore, by Proposition~\ref{prp:preservecanonical}, it preserves $c'$ on $S$. Hence $\gamma$ preserves $c'$. 
Since $\gamma$ preserves $c$ on $S$,
we can apply Lemma~\ref{lem:nofixedpoint} and deduce that $\gamma$ preserves $c$. \zed
\end{proof}

\subsection{Graphs containing cycles}

In this section assume that $G$ is a graph containing a cycle and $c$ is a \aed vertex \zed colouring of $G$ satisfying \aed Assumption~\ref{ass:general} \zed. Furthermore we will always assume that $c'$ is the canonical edge colouring corresponding to $c$. We start with some observations on $c'$. Let $W = e_1e_2\dots e_l$ be the edge sequence of a closed walk \aed starting at a vertex $v$.\zed
\begin{enumerate}[label=(O\arabic*)]
\item \label{obs:even} If $l$ is even, then $\sum_{i=1}^l (-1)^i c'(e_i) = 0$.
\item \label{obs:odd} If $l$ is odd, then $\sum_{i=1}^l (-1)^i c'(e_i) = - 2 c(v)$.
\end{enumerate}

Note that sum in the second observation evaluates to $0$ for $p=2$. If $p$ is odd, however, this sum can be used to find $c(v)$. Consequently, any automorphism which preserves $c'$ must map $v$ to a vertex with the same colour. Hence by Lemma \ref{lem:nofixedpoint} every such automorphism preserves $c$. \aed We have thus proved the following.

\begin{prp} \label{prp:Not_bipartite_p_odd}
If $G$ is not bipartite and $p$ is odd, then any automorphism which preserves the canonical edge colouring must also preserve $c$. \qed
\end{prp}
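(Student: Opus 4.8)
The plan is to exploit observation \ref{obs:odd}: when $p$ is odd, an odd closed walk through a vertex $v$ determines $c(v)$ from $c'$ alone, and this suffices because a connected non-bipartite graph has an odd closed walk through every vertex.

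First I would fix an arbitrary vertex $v$. Since $G$ is connected and not bipartite it contains an odd cycle, so choosing a path from $v$ to a vertex $w$ of such a cycle, traversing the cycle once, and returning along the reversed path produces a closed walk $W = e_1 e_2 \cdots e_l$ starting (and ending) at $v$ whose length $l = 2m + (2k+1)$ is odd (here $m$ is the length of the path and $2k+1$ the length of the cycle; if $v$ already lies on an odd cycle, take $m = 0$). By \ref{obs:odd} we get $\sum_{i=1}^l (-1)^i c'(e_i) = -2\,c(v)$, and since $p$ is odd, $2$ is a unit in $\mathbb Z_p$, so
\[
  c(v) = -2^{-1} \sum_{i=1}^l (-1)^i c'(e_i),
\]
a value depending only on the restriction of $c'$ to the edges of $W$.

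Next, let $\gamma$ be any automorphism preserving $c'$. Applying $\gamma$ to $W$ gives the closed walk $\gamma W = (\gamma e_1)(\gamma e_2)\cdots(\gamma e_l)$, which starts and ends at $\gamma v$ and still has odd length $l$, so \ref{obs:odd} applied to $\gamma W$ yields $\sum_{i=1}^l (-1)^i c'(\gamma e_i) = -2\,c(\gamma v)$. Because $\gamma$ preserves $c'$, the left-hand side equals $\sum_{i=1}^l (-1)^i c'(e_i) = -2\,c(v)$; cancelling the unit $2$ gives $c(\gamma v) = c(v)$.

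Having produced a vertex $v$ with $c(v) = c(\gamma v)$, Lemma \ref{lem:nofixedpoint} immediately yields that $\gamma$ preserves $c$. I do not expect a genuine obstacle here: the only points needing a little care are the construction of an odd closed walk through a prescribed vertex (precisely where connectedness and non-bipartiteness enter) and the invertibility of $2$ modulo $p$ (precisely where the hypothesis that $p$ is odd is used — for $p = 2$ the alternating sum in \ref{obs:odd} is identically $0$ and carries no information, so the hypothesis cannot be dropped).
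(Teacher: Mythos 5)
Your proposal is correct and follows essentially the same route as the paper: the paper's (very terse) argument preceding the proposition likewise uses observation \ref{obs:odd} to recover $c(v)$ from the canonical edge colouring along an odd closed walk (possible at every vertex since $G$ is connected and non-bipartite, and since $2$ is invertible in $\mathbb{Z}_p$ for odd $p$), and then invokes Lemma~\ref{lem:nofixedpoint}. Your write-up merely makes explicit the construction of the odd closed walk through an arbitrary vertex and the comparison with its image under $\gamma$, which the paper leaves implicit.
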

\zed

We now define an equivalence relation on a subset of the edge set of $G$. Let $E_C$ be the set of edges of $G$ which are contained in some cycle. Define a relation on $E_C$ by
\[
	e \sim f \Leftrightarrow \text{every cycle containing $e$ also contains $f$}.
\]

\begin{prp}
The relation $\sim$ is an equivalence relation.
\end{prp}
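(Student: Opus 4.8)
The plan is to verify the three defining properties of an equivalence relation directly, the only subtle one being transitivity. Reflexivity is immediate: every cycle containing $e$ trivially contains $e$, so $e \sim e$ for all $e \in E_C$ (and indeed every $e \in E_C$ lies on at least one cycle, so the relation is defined everywhere on $E_C$). Symmetry is the first point that requires an actual argument: I need to show that if every cycle through $e$ also passes through $f$, then every cycle through $f$ also passes through $e$. Here I would argue by contradiction — suppose $C_1$ is a cycle containing $f$ but not $e$. On the other hand, since $e \in E_C$, there is some cycle $C_2$ containing $e$, and by hypothesis $C_2$ also contains $f$. The idea is to combine $C_1$ and $C_2$ (which share the edge $f$) via a symmetric-difference / ear-exchange argument to produce a cycle that contains $e$ but avoids $f$, contradicting $e \sim f$.

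For transitivity, suppose $e \sim f$ and $f \sim g$; I want to show $e \sim g$, i.e.\ every cycle $C$ containing $e$ also contains $g$. Given such a $C$: since $e \sim f$, $C$ contains $f$; since $f \sim g$, $C$ contains $g$. Done — so transitivity is in fact the easy step once the relation is phrased this way, and symmetry is the one that carries the real content.

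The main obstacle is therefore the symmetric-difference argument underlying symmetry. The technical point is that the symmetric difference of the edge sets of two cycles need not be a single cycle, but it is an edge-disjoint union of cycles; one must then locate, among these cycles, one that contains $e$ and not $f$. Concretely, in $C_1 \cup C_2$ the edge $f$ is a common edge, so it cancels in the symmetric difference $C_1 \triangle C_2$; the edge $e$ lies in $C_2$ but not in $C_1$, hence survives in $C_1 \triangle C_2$, and so $e$ lies in one of the cycles comprising $C_1 \triangle C_2$. That cycle is a subgraph of $C_1 \cup C_2$ and omits $f$, giving the required contradiction. Care is needed to phrase this cleanly for graphs of arbitrary cardinality (no finiteness is actually used: each cycle is a finite subgraph, and the symmetric-difference decomposition is a purely local statement about the finite subgraph $C_1 \cup C_2$), but no essentially new ideas are needed beyond this.
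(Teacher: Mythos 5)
Your proof is correct and follows essentially the same route as the paper: reflexivity and transitivity are immediate, and symmetry is proved by contradiction by combining a cycle through $f$ avoiding $e$ with a cycle through $e$ (which by hypothesis contains $f$) to extract a cycle through $e$ avoiding $f$. The only cosmetic difference is that you extract this cycle from the symmetric difference $C_1 \triangle C_2$ via the even-degree decomposition, whereas the paper takes it from $(C \cup D) - \{f\}$; both yield the same contradiction.
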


\begin{proof}
The relation clearly is reflexive and transitive. To show that it is symmetric assume for a contradiction that every cycle containing $e$ must also contain $f$, but there is a cycle $C$ which contains $f$ but not $e$. Let $D$ be any cycle containing $e$. Then $(C \cup D) - \{f\}$ contains a cycle through $e$ which does not contain $f$.
\end{proof}

Every equivalence class of the relation $\sim$ is a subset of some cycle (in fact it is the intersection of all cycles containing any and hence all of its members). In particular, every class forms either a cycle or is a disjoint union of paths.
\aed
A {\em path equivalence class} (resp. {\em cycle equivalence class}) is a path (resp. cycle) $P$ in $G$ such that the edges in $P$ are an entire equivalence class for the above relation. An equivalence class that is a disjoint union of (at least two) paths is a {\em disjoint equivalence class}.
\zed

\begin{lem}
\label{lem:existence-pathclass}
\aed
Every cycle which is not a cycle equivalence class contains at least two distinct path equivalence classes.
\zed
\end{lem}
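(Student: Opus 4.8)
The plan is to argue about a single cycle $C$ that is not itself a cycle equivalence class, and to extract two distinct path equivalence classes from it. First I would observe that since $C$ is not a cycle equivalence class, there must be some edge $e \in C$ whose equivalence class $[e]$ is a proper subset of the edges of $C$; indeed if every edge of $C$ had all of $C$ as its class, then $C$ itself would be a cycle equivalence class. As noted in the text, each equivalence class contained in $C$ is either all of $C$ or a disjoint union of (at least one) paths; since $[e] \subsetneq E(C)$, the class $[e]$ is a (possibly disconnected) union of paths strictly inside $C$. The key point to establish is that the edges of $C$ split into at least two equivalence classes, and that at least two of those classes are genuine \emph{path} equivalence classes in the sense defined above (i.e.\ connected, and not merely a disjoint union of $\geq 2$ paths).

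The main step is to understand how the classes partition $C$. I would show: if $e \sim f$ for two edges $e,f$ of $C$, and $P$ is one of the two arcs of $C$ between $e$ and $f$, then every edge of $P$ is in $[e]$, \emph{or} every edge of the complementary arc is in $[e]$. Roughly: take any cycle $D$ through $e$; it also passes through $f$; the symmetric-difference argument (as in the proof that $\sim$ is symmetric) lets one reroute $D$ along one of the two arcs of $C$, forcing the edges of that arc to lie in every cycle through $e$. Iterating this, the trace of a single equivalence class on $C$ is an \emph{arc} of $C$ (a connected path), not a scattered set — the disjoint-union phenomenon happens across different cycles, not within one cycle. Hence the edges of $C$ are partitioned into consecutive arcs $A_1, A_2, \dots, A_m$ around the cycle, each $A_j$ lying in a single equivalence class, with $m \geq 2$ because $C$ is not a cycle equivalence class.

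It remains to see that at least two of these arcs $A_j$ are themselves \emph{complete} equivalence classes (so that they are path equivalence classes). Here I would argue that consecutive arcs lie in \emph{distinct} classes — if $A_j$ and $A_{j+1}$ were in the same class $[e]$, the trace of $[e]$ on $C$ would be $A_j \cup A_{j+1}$ together with the shared endpoint, contradicting maximality of the arcs (or: then $[e]\cap E(C)$ would be a longer arc). So the classes $[A_1], \dots, [A_m]$ are pairwise distinct, giving $m \geq 2$ distinct classes meeting $C$. For such a class $[A_j]$, every cycle through an edge of $A_j$ contains all of $[A_j]$; restricting attention to the cycle $C$ shows $[A_j] \cap E(C) = A_j$, and since every edge of $[A_j]$ lies in \emph{some} cycle, a further application of the rerouting argument with $C$ in place of $D$ shows $[A_j] \subseteq E(C)$, hence $[A_j] = A_j$ is a connected path forming an entire class — a path equivalence class. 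Taking $j = 1$ and $j = 2$ yields the two required distinct path equivalence classes.

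The delicate point — and the one I expect to be the main obstacle — is the claim that a single equivalence class meets $C$ in a connected arc rather than a disconnected set of edges; this is exactly where one must be careful with the Mayer–Vietoris-style symmetric-difference manipulation $(C \cup D) - \{f\}$ and make sure the rerouted cycle really does stay on the intended arc of $C$ and really does pass through $e$. Everything else is bookkeeping about how arcs partition a cycle.
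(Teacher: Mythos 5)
There is a genuine gap, and it is exactly at the step you yourself flag as the main obstacle: the claim that an equivalence class meets the cycle $C$ in a single connected arc (``the disjoint-union phenomenon happens across different cycles, not within one cycle'') is false, and no rerouting argument can rescue it. Concretely, take the $4$-cycle $C$ on $v_1v_2v_3v_4$ and add two new vertices $p,q$ together with the paths $v_2\,p\,v_3$ and $v_4\,q\,v_1$. The cycles through $v_1v_2$ are precisely $C$ and the three cycles obtained from $C$ by replacing $v_2v_3$ by $v_2\,p\,v_3$ and/or $v_4v_1$ by $v_4\,q\,v_1$; the intersection of their edge sets is $\{v_1v_2,\,v_3v_4\}$, so this two-edge set is a single equivalence class whose trace on $C$ is disconnected (the classes of $v_2v_3$ and of $v_4v_1$ are the singletons $\{v_2v_3\}$ and $\{v_4v_1\}$). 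Thus $E(C)$ is not partitioned into arcs, and the subsequent steps of your argument (consecutive arcs lie in distinct classes; each arc is a complete class) have no foundation. The parts of your sketch that do hold --- every class meeting $C$ is wholly contained in $C$, and if all classes on $C$ happen to be path classes then there are at least two of them --- are the easy parts; the whole content of the lemma is precisely the case of a class that is a disjoint union of paths inside $C$, which your proposal argues away.

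For comparison, the paper proves only a weaker structural fact by the $(C\cup D)-\{f\}$ rerouting you have in mind: distinct classes cannot \emph{cross} on $C$, i.e.\ their edges cannot interleave in the cyclic order (in the example above, $\{v_1v_2,v_3v_4\}$ separates the other two classes rather than interleaving with them). It then confronts disconnected classes directly: if $A$ is such a class inside $C$, non-crossing forces every other class into a single component of $C-A$; within a component $X$ one chooses a disconnected class $B$ minimising the length of a component of $X-B$ whose endpoints are not incident with $A$, and minimality forces every class meeting that shortest subpath to be connected, i.e.\ a path equivalence class. Since $A$ has at least two components, $C-A$ has at least two components, each supplying a path class, which gives the two required classes. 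Your proposal would need to be rebuilt along these lines (non-crossing plus a minimality argument) rather than via the arc decomposition.
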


\begin{proof}
We first claim that different equivalence classes cannot \emph{cross} on a cycle, i.e.\ if $e \sim e' \nsim f \sim f'$, then the cyclic order of the four edges on a cycle cannot be $(e,f,e',f')$. Indeed, if $e$ and $e'$ are not equivalent to $f$, then there is a cycle containing $e$ and $e'$, but not $f$. In particular, there is a path $P$ between endpoints of $e$ and $e'$ not containing $f$. If there is a cycle $C$ such that the cyclic order on $C$ is $(e,f,e',f')$, then $P \cup C$ contains a cycle which contains $f'$ but not $f$, showing that $f \nsim f'$.

Now let $C$ be a cycle  \aed that is not a cycle equivalence class. \zed
If all equivalence classes contained in $C$ are 
\aed path equivalence classes, \zed then we are done since there are at least two of them. So assume that there is a \aed disjoint equivalence class $A$ in $C$. \zed Since equivalence classes cannot cross, every other equivalence class is contained in a unique component of $C - A$. \bed Let $X$ be a component of $C - A$ and let $B$ be a disjoint equivalence class contained in $X$. Then at least one component of $X - B$ is a path whose endpoints are not incident with any edge in $A$. Assume that $B$ be an equivalence class which minimises the length of the shortest such path. Any equivalence class that meets this shortest path has to be completely contained in it. \zed By the minimality condition on $B$ each such equivalence class must be connected. \bed Whence every component of $C - A$ contains at least one path equivalence class\zed .
\end{proof}

We now distinguish cases by the different types of equivalence classes: First we deal with the case where there is a \aed cycle equivalence class. If there is no cycle equivalence class, then by Lemma~\ref{lem:existence-pathclass} there is a path equivalence class. We then \zed consider the case where there are path equivalence classes of length at least $2$ and finally we look at the case where all path equivalence classes have length $1$.

\begin{clm}
Theorem~\ref{thm:vertextoedgecolouring} is true if there is a cycle equivalence class.
\end{clm}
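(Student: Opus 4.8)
The plan is to exploit the extra structure provided by a cycle equivalence class $A$. By definition, $A$ is a cycle in $G$, say $A = v_0 v_1 \cdots v_{m-1} v_0$, and every cycle of $G$ that contains one edge of $A$ contains all of $A$. The key consequence is that $A$ is an almost-rigid subconfiguration: since $A$ is itself a cycle equivalence class, no edge of $A$ lies on any cycle other than $A$ (a cycle $C \ne A$ through an edge of $A$ would have to contain all of $A$, hence equal $A$ as an edge set). In particular, deleting any single edge $e = v_i v_{i+1}$ of $A$ disconnects $A$ into a path, and the only automorphisms of $G$ that setwise fix the edge set of $A$ act on $A$ as a subgroup of the dihedral group $D_m$.

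The main step is to recolour the edges of $A$ so as to pin down its action. Starting from the canonical edge colouring $c'$ of $c$ (Assumption~\ref{ass:general}: colours in $\mathbb Z_p$), I would produce an edge colouring $c''$ that differs from $c'$ only on edges of $A$, chosen so that (i) any $\gamma$ preserving $c''$ must fix the edge set of $A$ and act trivially on it, i.e.\ fix every vertex $v_i$ of $A$, and (ii) $c''$ still uses at most $p$ colours. For (i) one can, for instance, use observation~\ref{obs:odd}/\ref{obs:even} together with the fact that on the cycle $A$ the canonical colours $c'(v_iv_{i+1}) = c(v_i)+c(v_{i+1})$ already encode $c$ up to a global reflection/rotation; it then suffices to break the dihedral symmetry of $A$ by changing the colours on a bounded number (say one or two) of edges of $A$ to force a unique orientation and starting point, exactly as in the proof of Theorem~\ref{thm:vertextoedgecolouringinfinite} but now using the finite-colour bookkeeping rather than an infinite colour set. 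The point is that a cycle equivalence class is rigid enough that only finitely many edges need to be touched, so staying within $p$ colours is a finite combinatorial check (when $p = 2$ some care is needed, but $A$ still has a preserved edge set, and fixing two suitably placed edges of $A$ forces the action on $A$ to be the identity).

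Once every vertex of $A$ is fixed by every $\gamma$ preserving $c''$, we are done by Lemma~\ref{lem:changecolouring}: the only edges where $c''$ differs from $c'$ are edges $e = uv$ of $A$, and for such an edge both endpoints lie on $A$ and are therefore fixed, so $c(u) = c(\gamma u)$ and $c(v) = c(\gamma v)$ hold trivially; moreover $c''$ genuinely differs from $c'$ on at least one edge by construction. Hence $c''$ is less symmetric than $c$, and $c''$ uses at most $p$ colours, as required. The main obstacle I anticipate is the case $p = 2$, where the sums in \ref{obs:even}/\ref{obs:odd} carry less information: there one cannot read off $c$ along $A$ from $c'$ alone, so the recolouring of $A$ must itself carry enough data to both fix $A$ pointwise \emph{and}, via Lemma~\ref{lem:changecolouring}, be compatible with $c$ on the affected endpoints — but since those endpoints are all on $A$ and hence fixed, compatibility is automatic, so the real work is just the explicit dihedral-symmetry-breaking recolouring of a single finite cycle with two colours, which is elementary (e.g.\ make exactly one edge of $A$ a different colour from its two neighbours on $A$, and if $A$ has even length additionally break the reflection through that edge).
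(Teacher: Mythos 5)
There is a genuine gap at the heart of your plan. Your strategy is to recolour the edges of the cycle equivalence class $A$ so that every colour-preserving automorphism fixes $A$ \emph{pointwise}, by ``breaking the dihedral symmetry'' of $A$ within the $p$ available colours. But when $p=2$ and $A$ is short this is impossible: a cycle of length $3$, $4$ or $5$ admits no $2$-edge-colouring of its own edges with trivial stabiliser in the dihedral group (for a triangle the only patterns are $000,001,011,111$ up to symmetry, each preserved by a reflection; similarly every $2$-colouring of the edges of $C_4$ or $C_5$ is preserved by some nontrivial rotation or reflection). So your proposed recolouring (``one edge a different colour from its two neighbours, plus breaking the reflection'') simply does not exist in exactly the cases $p=2$, $|A|\le 5$, and these are the cases that make the claim nontrivial --- for $p$ odd and $A$ an odd cycle the canonical colouring already suffices by Proposition~\ref{prp:Not_bipartite_p_odd}, and for long cycles a pattern like $(1,1,0,1,0,0,\dots,0)$ does work. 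Your appeal to the proof of Theorem~\ref{thm:vertextoedgecolouringinfinite} does not transfer either: there the two recoloured edges receive \emph{fresh} colours, which is precisely what a finite palette forbids. A secondary inaccuracy: for $p=2$ the canonical colours on $A$ do \emph{not} determine $c$ on $A$ up to a rotation/reflection (complementing all vertex colours on an even cycle leaves the canonical edge colours unchanged), though this by itself would not be fatal since Lemma~\ref{lem:nofixedpoint} only needs one vertex with the correct colour.

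The way around this, and what the paper actually does, is to drop the demand that $A$ be fixed pointwise and instead tailor the recolouring of $A$ to the vertex colouring $c$, so that any surviving symmetry of $A$ automatically preserves $c$ restricted to $A$; that is enough for Lemma~\ref{lem:nofixedpoint} (your appeal to Lemma~\ref{lem:changecolouring} would also need $c(u)=c(\gamma u)$ only, not $\gamma u=u$). Concretely, for $A$ odd and $p=2$ one colours each edge by the colour of its opposite vertex, shifted by $1$ if necessary so that $A$ becomes the unique odd cycle violating~\ref{obs:odd}; then equal edge colours force equal opposite-vertex colours, so $c$ is preserved on $A$ without $A$ being rigid. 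The length-$4$ case needs yet further care (and, when there are two $4$-cycle equivalence classes, a second recoloured $4$-cycle and the connecting vertices $v_C,v_D$ are used to pin things down). As written, your argument covers only the easy regime (large $p$ or long $A$) and fails on the short cycles with $p=2$.
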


\begin{proof}
Let $C$ be a cycle equivalence class. First assume that $C$ has odd length. If $p$ is odd, then we already know that the canonical edge colouring has the desired properties, so we can assume that $p=2$. 

\aed We define an edge colouring $c''$ on $G$ as follows.
For all edges $e \in C$ let $v_e$ be the vertex opposite to $e$ in $C$. If $\sum_{v \in C} c(v) = 1$, then define $c''(e) = c(v_e)$ for all $e \in C$; otherwise set $c''(e) = c(v_e)+1$ for all $e \in C$. For all other edges in $G$ set $c''$ equal to $c'$. \zed
With this definition, the sum of the edge colours of $C$ is $1$. From \ref{obs:odd} we deduce that $C$ is the only \aed odd \zed cycle with this property. In particular, every $c''$-preserving automorphism $\gamma$ must preserve $C$ setwise. Furthermore, such an automorphism must preserve the vertex colouring $c$ on $C$ since equal edge colours of $e,f \in C$ imply equal vertex colours of $v_e$ and $v_f$. Hence $\gamma$ preserves the canonical edge colouring on $C$ and thus on all of $G$. By Lemma \ref{lem:nofixedpoint} any automorphism which preserves $c$ on $C$  and $c'$ on $G$ must preserve $c$ on all of $G$.

So we may assume that the length of $C$ is even. If $C$ contains at least $6$ edges, then 
\aed we instead take $c''$ to be the edge colouring of $G$ in which the edges of $C$ are coloured by the sequence $(1,1,0,1,0,0,\ldots,0)$ while all other edges in $G$ are coloured according to $c'$. Then $C$ is \zed the only cycle violating \ref{obs:even}. Hence 
\aed every $c''$-preserving automorphism $\gamma$ must fix $C$ setwise, \zed and since the sequence gives a distinguishing colouring, $C$ is fixed pointwise. \aed Thus, by Lemma \ref{lem:nofixedpoint} such an automorphism must preserve $c$ on all of $G$.\zed

Finally consider the case where $C$ has length $4$. If $C$ is the only cycle \bed equivalence class \zed of length $4$, then it must be setwise fixed by every automorphism. It is an easy exercise to find an edge colouring $c''$ of $C$ such that every automorphism of $C$ which preserves $c''$ also preserves the vertex colouring on $C$. Hence in this case we can again apply Lemma \ref{lem:nofixedpoint} to conclude that we have found a colouring with the desired properties.

We may hence assume that there is more than one cycle \bed equivalence class \zed of length $4$. Let $D$ be another such cycle. Note that removing the edges of $C$ disconnects $G$ into four components, since otherwise $C$ wouldn't form its own equivalence class. There is a unique vertex $v_D$ of $C$ which lies in the same component as $D$. Analogously we define the vertex $v_C \in D$. Now if we colour $C$ by the sequence $(1,0,0,0)$ and $D$ by the sequence $(1,1,1,0)$, then $C$ and $D$ are the only cycles violating \ref{obs:even}, so both of them must be fixed setwise by any automorphism which preserves the resulting edge colouring. This implies that $v_C$ and $v_D$ must be fixed by each such automorphism, and consequently every automorphism which preserves the edge colouring must fix $C$ and $D$ pointwise. By the same arguments as before we conclude that every automorphism preserving this edge colouring must also preserve $c$.
\end{proof}

\begin{clm}
Theorem~\ref{thm:vertextoedgecolouring} is true if there is a path equivalence class of length at least $2$.
\end{clm}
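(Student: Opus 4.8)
The plan is to continue under Assumption~\ref{ass:general} and first dispose of the easy sub-case: if $p$ is odd and $G$ is not bipartite, Proposition~\ref{prp:Not_bipartite_p_odd} already says the canonical edge colouring $c'$ is less symmetric than $c$, so there is nothing to prove. Hence I may assume $p = 2$ or $G$ is bipartite. The gain from this reduction is that, by observations \ref{obs:even} and \ref{obs:odd}, the \emph{alternating sum} $\sum_i (-1)^i c'(e_i) \in \mathbb{Z}_p$ of the colours along every cycle $e_1 e_2 \cdots$ vanishes under the canonical colouring: \ref{obs:even} handles even cycles, \ref{obs:odd} together with $p = 2$ handles odd cycles, and if $G$ is bipartite there are no odd cycles. (This sum is well defined only up to sign and up to the choice of starting edge, but whether it equals $0$ is unambiguous, and it is preserved by every colour-preserving automorphism.)

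Let $P = v_0 v_1 \cdots v_\ell$, with $\ell \ge 2$ and $e_i = v_{i-1}v_i$, be a path equivalence class, and obtain $c''$ from $c'$ by recolouring only the edges of $P$; put $\delta_i = c''(e_i) - c'(e_i)$. In any cycle $D$ through $P$ the block $e_1, \dots, e_\ell$ occurs consecutively (the internal vertices of $P$ have degree $2$ in $D$), so a short computation gives that the alternating sum of $D$ changes by $\pm \sum_i (-1)^i \delta_i$, while the alternating sum of a cycle avoiding $P$ is unchanged. If I arrange $\sum_i (-1)^i \delta_i \ne 0$, then under $c''$ a cycle has nonzero alternating sum exactly when it passes through $P$. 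Since this property is invariant under colour-preserving automorphisms, any $\gamma$ preserving $c''$ permutes the cycles through $P$, hence stabilises their intersection setwise; and by the remark that every equivalence class equals the intersection of the cycles containing it, that intersection is precisely the edge set of $P$. So $\gamma$ stabilises $P$ setwise, and therefore acts on the path $P$ as the identity or as the reversal $v_i \mapsto v_{\ell - i}$.

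To kill the reversal it is enough to choose $c''$ \emph{non-palindromic} along $P$, since a $c''$-preserving $\gamma$ reversing $P$ would force $c''(e_i) = c''(e_{\ell + 1 - i})$ for all $i$. It is routine to check that a recolouring of $P$ which is simultaneously non-palindromic and has $\sum_i (-1)^i \delta_i \ne 0$ exists whenever $\ell \ge 3$, and also when $\ell = 2$ unless $p = 2$ and $c(v_0) \ne c(v_2)$; in all of those cases $\gamma$ is the identity on $P$, so $c(\gamma v_i) = c(v_i)$ for $i = 0, \dots, \ell$, and since $c''$ differs from $c'$ only on edges of $P$, Lemma~\ref{lem:changecolouring} finishes the proof.

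The one remaining configuration, $\ell = 2$, $p = 2$, $c(v_0) \ne c(v_2)$ (equivalently $c'(e_1) \ne c'(e_2)$), is where any recolouring of $P$ that perturbs its alternating sum is automatically palindromic, so $P$ cannot be made both recognisable and reversal-free from within; I expect this to be the main obstacle. The plan here is to recolour $e_1$ alone --- which, by the paragraph above, already makes $P$ recognisable, so every $c''$-preserving automorphism fixes $v_1$ and stabilises $\{v_0, v_2\}$ --- and then to break the $v_0 \leftrightarrow v_2$ symmetry with one further modification at $v_0$. The vertex $v_0$ has a neighbour $u \notin \{v_1, v_2\}$, for otherwise every cycle through $e_1$ would contain the edge $v_0 v_2$, making the triangle $v_0 v_1 v_2$ a cycle equivalence class, contrary to the standing hypothesis of this case. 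Recolouring $v_0 u$ (and, if $v_0 u$ lies on a cycle, also recolouring the rest of its equivalence class so that the cycle-recognition argument survives) prevents a $c''$-preserving automorphism from sending $v_0$ to $v_2$, because no recoloured edge is incident with $v_2$; hence $\gamma$ fixes $v_0, v_1, v_2$ and Lemma~\ref{lem:changecolouring} again applies. The careful bookkeeping for this exceptional case --- especially the compensation needed when $v_0 u$ is on a cycle --- is the delicate part; everything else is straightforward.
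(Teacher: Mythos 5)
Your main line of argument (perturb the canonical colouring $c'$ only on the edges of $P$ so that the alternating sum around a cycle is nonzero exactly for the cycles through $P$, then kill the reversal of $P$ by a non-palindromic choice and finish with Lemma~\ref{lem:changecolouring}, after using Proposition~\ref{prp:Not_bipartite_p_odd} to dismiss the odd-$p$ non-bipartite situation) is sound, and it correctly identifies the one configuration it cannot reach: $\ell=2$, $p=2$, $c(v_0)\neq c(v_2)$. The problem is that your treatment of this exceptional case is not just incomplete but wrong as sketched. The assertion that recolouring $v_0u$ ``prevents a $c''$-preserving automorphism from sending $v_0$ to $v_2$, because no recoloured edge is incident with $v_2$'' does not follow: a swap merely forces the image edge at $v_2$ to carry the same $c''$-colour as $v_0u$, which constrains $c(\gamma u)$ but yields no contradiction. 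Concretely, take $G=K_{2,3}$ with parts $\{v_0,v_2\}$ and $\{v_1,x,y\}$, $c(v_2)=1$ and all other vertices coloured $0$, and $P=v_0v_1v_2$ (a path equivalence class of length $2$; the others are $\{v_0x,xv_2\}$ and $\{v_0y,yv_2\}$, and there is no cycle class). Your recipe flips $e_1=v_0v_1$, picks $u=x$ (or $y$), and, since $v_0x$ lies on cycles, the only compensation keeping all cycle sums of cycles through its class unchanged is to flip $xv_2$ as well. The resulting colouring has $c''(v_0v_1)=c''(v_1v_2)=c''(v_0x)=c''(yv_2)=1$ and $c''(xv_2)=c''(v_0y)=0$, and it is preserved by the automorphism fixing $v_1$ and swapping $v_0\leftrightarrow v_2$ and $x\leftrightarrow y$, which does not preserve $c$. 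So within your framework (modifying $c'$ itself) the exceptional case is a genuine obstruction, not bookkeeping.

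The idea you are missing is that you need not perturb the canonical colouring of $c$: you may first modify the \emph{vertex} colouring and take the canonical edge colouring of the modification. The paper recolours all vertices of $P$ with colour $0$ to get $\tilde c$, takes its canonical edge colouring $\tilde c'$ (so all edges of $P$ are coloured $0$), and adds $1$ to the first edge of $P$. The cycle-recognition argument is the same as yours, but now the colouring along $P$ is automatically non-palindromic for every $\ell\geq 2$ and every $p$, so $P$ is fixed pointwise by any automorphism preserving the colouring; Lemma~\ref{lem:nofixedpoint} then gives preservation of $\tilde c$, and since $c$ and $\tilde c$ differ only on the (pointwise fixed) vertices of $P$, preservation of $c$ follows. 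This one change dissolves your exceptional case entirely; if you want to salvage your write-up, replace the $\ell=2$, $p=2$, $c(v_0)\neq c(v_2)$ patch by this vertex-recolouring device (which also renders the case split on $\ell$ and on palindromicity unnecessary).
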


\begin{proof}
Let $P$ be an arbitrary path equivalence class of length at least $2$, and let $\tilde{c}$ be the colouring obtained by colouring all vertices of $P$ with colour $0$ \aed and all others by $c$. \zed

Let $\tilde{c}'$ be the canonical edge colouring corresponding to $\tilde{c}$ and let $\tilde{c}''$ be the colouring obtained from $\tilde{c}'$ by adding $1$ to the colour of the first edge of $P$. 

Then a cycle violates \ref{obs:even} or \ref{obs:odd} with respect to $\tilde{c}''$ if and only if it contains $P$. Since $P$ is the unique equivalence class with this property, any automorphism which preserves $\tilde{c}''$ must fix $P$ setwise. By Lemma~\ref{lem:nofixedpoint}, every automorphism which fixes $\tilde{c}''$ must thus fix $\tilde{c}$.

Furthermore, since all vertices of $P$ hat the same colour in $\tilde{c}$, all edges had the same colour in $\tilde{c}'$. Since we only changed the colour of the first edge, this implies that $P$ must be fixed pointwise by every automorphism which preserves $\tilde{c}''$. Since the only vertices where $c$ and $\tilde{c}$ potentially differ are those of $P$, this implies that every automorphism which fixes $\tilde{c}''$ must also fix $c$.
\end{proof}

\begin{clm} \label{clm:Path_classes_One}
Theorem~\ref{thm:vertextoedgecolouring} is true if all path equivalence classes have length $1$.
\end{clm}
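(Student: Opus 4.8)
We may assume $G$ has no cycle equivalence class and that every path equivalence class has length $1$; then by Lemma~\ref{lem:existence-pathclass} $G$ has at least one \emph{special} edge, by which we mean an edge $e$ whose $\sim$-class is $\{e\}$, and moreover every cycle of $G$ contains at least two special edges. Furthermore, if $p$ is odd and $G$ is not bipartite, Proposition~\ref{prp:Not_bipartite_p_odd} already applies to the canonical colouring, so we may also assume that $G$ is bipartite or $p = 2$.

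The basic tool is the following. For a special edge $e$, let $c''$ coincide with the canonical colouring $c'$ except that $c''(e) = c'(e) + 1$. If $G$ is bipartite then all cycles have even length, and if $p = 2$ the right-hand side of \ref{obs:odd} vanishes; in either case, under $c''$ a cycle has alternating edge-colour sum $0$ precisely when it avoids $e$ (by \ref{obs:even}, and \ref{obs:odd} when relevant). An automorphism $\gamma$ preserving $c''$ leaves the alternating sum of every closed walk unchanged, hence permutes the cycles through $e$; so $\gamma e$ lies on exactly the cycles $e$ does, i.e.\ $\gamma e \sim e$, and since $\{e\}$ is a whole class this forces $\gamma e = e$.

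Now I would split into two sub-cases. First, if some special edge $e = uv$ has $c(u) = c(v)$, take $c''$ as above: any $c''$-preserving $\gamma$ fixes $e$ setwise, so $\{\gamma u, \gamma v\} = \{u, v\}$, and since $c$ is constant on $\{u, v\}$ we get $c(\gamma u) = c(u)$ and $c(\gamma v) = c(v)$. Hence $c''$ satisfies the hypotheses of Lemma~\ref{lem:changecolouring} and is less symmetric than $c$. Second, suppose every special edge is \emph{bichromatic}, i.e.\ $c(u) \neq c(v)$ for each special edge $uv$. Fixing a special edge $e = uv$, the colouring $c''$ of the first sub-case no longer suffices, because a $c''$-preserving $\gamma$ might interchange $u$ and $v$, which must be prevented. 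Since $e$ lies on a cycle, $\deg u \geq 2$; the plan is to recolour, in addition to $e$, a suitable second edge $f$ at $u$ --- dictating the choice by whether $f$ is special or lies in a disjoint equivalence class through a cycle also containing $e$ --- and to choose the two recolouring increments so that the set of cycles with nonzero alternating sum under the new colouring determines the \emph{pair} $\{e, f\}$, and therefore the vertex $u = e \cap f$. Any automorphism preserving the new colouring then fixes $u$, hence also $v$, and Lemma~\ref{lem:changecolouring} again yields the conclusion.

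The principal obstacle is this second sub-case. Making the second recolouring pin down $\{e, f\}$ requires care: when $f$ is not special, recolouring it perturbs a whole class of cycles at once, so one must treat separately the possible types and positions of the class of $f$; and choosing increments compatible with a budget of only $p$ colours is awkward for $p \in \{2, 3\}$. In the remaining genuinely bipartite case with $p$ odd one further exploits that every automorphism of the connected bipartite graph $G$ either preserves or swaps its two parts. Carrying this case analysis through is the substance of the claim.
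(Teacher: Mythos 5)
Your reduction to the case where $G$ is bipartite or $p=2$ (via Proposition~\ref{prp:Not_bipartite_p_odd}), your basic tool (adding $1$ to a special edge forces any automorphism preserving the modified canonical colouring to fix that edge setwise, since $\{e\}$ is a full $\sim$-class), and your first sub-case (some special edge is $c$-monochromatic, then apply Lemma~\ref{lem:changecolouring}) are all correct and essentially coincide with the opening observation of the paper's proof. The problem is that everything after that is a plan rather than a proof, and the part you defer -- the case where every special edge has differently coloured endpoints -- is precisely where the entire difficulty of this claim lies. You say yourself that ``carrying this case analysis through is the substance of the claim,'' but you do not carry it through, and the sketched strategy (recolour a second edge $f$ incident to $u$ and choose increments so that the violated-cycle pattern determines the pair $\{e,f\}$ and hence $u$) is not shown to work. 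Concretely: when no second \emph{special} edge is incident to $u$ (which is the hardest situation), the edge $f$ you recolour lies in a tree of $G-E_P$ or in a disjoint equivalence class, and then the set of cycles with nonzero alternating sum only constrains $f$ up to data shared by many edges, so a colour-preserving automorphism may move $f$, or fix $\{e,f\}$ setwise while still swapping $u$ and $v$; nothing in your outline excludes this, and with only $p$ colours (you note $p\in\{2,3\}$ is awkward) you cannot simply introduce marker colours as in the infinite case.

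By contrast, the paper's proof resolves the bichromatic case by quite different means, none of which your plan reproduces: if two edges of $E_P$ share a vertex, it modifies the \emph{vertex} colouring at one or two vertices to manufacture monochromatic special edges and then tracks which vertices can move; and if no two edges of $E_P$ are incident, it proves the existence of two special edges satisfying property~\ref{clm:star} (no automorphism can fix both while swapping the endpoints of either) via a genuinely nontrivial argument -- two cycles through a special edge $e_0$ that differ in the other edge at $u_0$, and a comparison of distances in the forest $G-E_P$ -- before finishing with Lemma~\ref{lem:nofixedpoint} and the monochromatic-edge observation applied to the modified vertex colouring. Your proposal contains no analogue of either step, so as it stands there is a genuine gap: the claim is only established in the (easy) monochromatic sub-case.
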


\begin{proof}
Denote by $E_P$ the set of edges contained in path equivalence classes. Clearly every automorphism must fix $E_P$ setwise. \aed Let us call an edge {\em $c$-monochromatic} if its vertices both receive the same colour in $c$.

We begin with an observation: if there is $c$-monochromatic edge $e\in E_P$, then (using the same colours as $c$) we can describe an edge-colouring $c''$ that is less symmetric than $c$. Indeed, suppose that $e \in E_P$ is $c$-monochromatic. \zed Let $c'$ be the canonical edge colouring corresponding to $c$ and let $c''$ be obtained from $c'$ by adding $1$ to the colour of $e$. Then a cycle violates \ref{obs:even} or \ref{obs:odd} with respect to $c''$ if and only if it contains $e$. Since $e$ is the unique equivalence class with this property, any automorphism which preserves $c''$ must fix $e$. Since the endpoints of $e$ have the same colour in $c$ we conclude by Lemma~\ref{lem:nofixedpoint} that any such automorphism preserves $c$, whence $c''$ is less symmetric than $c$.

\aed
From now on we assume that $E_P$ contains no $c$-monochromatic edges.
We next consider the case where there exists a 3-cycle in $E_P$. If $p = 2$ then there would be a $c$-monochromatic edge in $E_P$, which cannot happen by assumption. On the other hand, if $p$ is odd then $G$ is not bipartite and we can apply Proposition~\ref{prp:Not_bipartite_p_odd} to deduce that the canonical edge colouring corresponding to $c$ is less symmetric than $c$. Hence, if $E_P$ contains a $3$-cycle then our claim holds.

From now on we assume that $E_P$ contains no $3$-cycles. Let us now address the case where there are two distinct incident edges $e = uv$ and $f= vw$ in $E_P$ such that $c(u) = c(w)$. By assumption, $uw \not \in E_P$.
Let $\tilde{c}$ be a new vertex colouring of $G$ in which $\tilde{c}(v) := c(w)$ and for all vertices $x \not = v$ we have $\tilde{c}(x) := c(x)$. Take $\tilde{c}'$ to be the canonical edge colouring of $G$ corresponding to $\tilde{c}$. 
Let $\tilde{c}''$ be obtained from $\tilde{c}'$ by adding $1$ to the colour of $e$. Then a cycle violates \ref{obs:even} or \ref{obs:odd} with respect to $\tilde{c}''$ if and only if it contains $e$. Therefore, if $\gamma$ is a $\tilde{c}''$-preserving automorphism then $\gamma$ must fix $e$ setwise. Since $\tilde{c}''$ and $\tilde{c}'$ differ only in $e$, the automorphism $\gamma$ must also preserve $\tilde{c}'$. Since $\tilde{c}(v) = c(w) = c(u)$ and $\tilde{c}(u) = c(u)$, we have that $\gamma$ preserves $\tilde{c}$ by Lemma~\ref{lem:nofixedpoint}. Now $uv$ and $vw$ are $\tilde{c}$-monochromatic edges and no vertex other than $u,v$ or $w$ lies in a $\tilde{c}$-monochromatic edge in $E_P$. Since $uw \not \in E_P$, we conclude that $v$ is fixed by $\gamma$. Since $\tilde{c}$ and $c$ differ only at $v$, it follows that $\gamma$ preserves $c$. Hence $\tilde{c}''$ is less symmetric than $c$, and our claim holds. 

From now on we assume that if there are two distinct incident edges $e = uv$ and $f= vw$ in $E_P$, then $c(u) \not = c(w)$. Now consider the general case where there are two distinct incident edges $e = uv$ and $f= vw$ in $E_P$. By assumption we have that $c(u) \not = c(w)$ and $uw \not \in E_P$.
Let $\tilde{c}$ be a new vertex colouring of $G$ in which $\tilde{c}(u) := c(w)$ and $\tilde{c}(v) := c(w)$ and for all vertices $x$ distinct from $u$ and $v$ we have $\tilde{c}(x) := c(x)$. Take $\tilde{c}'$ to be the canonical edge colouring of $G$ corresponding to $\tilde{c}$. 
Let $\tilde{c}''$ be obtained from $\tilde{c}'$ by adding $1$ to the colour of $f$. Then a cycle violates \ref{obs:even} or \ref{obs:odd} with respect to $\tilde{c}''$ if and only if it contains $f$. Therefore, if $\gamma$ is a $\tilde{c}''$-preserving automorphism then $\gamma$ must fix $f$ setwise. It follows then that $\gamma$ must also preserve $\tilde{c}'$. 
Since $f$ is $\tilde{c}$-monochromatic, we have that $\gamma$ preserves $\tilde{c}$ by Lemma~\ref{lem:nofixedpoint}. Notice that all edges in $E_P$ that do not contain $u$ or $v$ are not $\tilde{c}$-monochromatic. Furthermore, if there is a $\tilde{c}$-monochromatic edge in $E_P$ not containing $u$ or $w$, say $vu'$, then $\tilde{c}(u') = c(u')$ must equal $\tilde{c}(v) = c(w)$. Hence $wv$ and $vu'$ are two distinct incident edges in $E_P$ with $c(w) = c(u')$, which cannot happen by assumption. Therefore, the only $\tilde{c}$-monochromatic edges in $E_P$ are $uv$ and $vw$ and, possibly, some other edges containing $u$. Since $\gamma$ preserves $\tilde{c}$ and fixes $f$ setwise, it must fix $v$ and therefore it must also fix $w$. Furthermore, $e$ and $f$ are the only $\tilde{c}$-monochromatic edges containing $v$, so $u$ must also be fixed by $\gamma$. Since $\tilde{c}$ and $c$ differ only at $u$ and $v$, it follows that $\gamma$ preserves $c$. Hence $\tilde{c}''$ is less symmetric than $c$ and our claim holds.

From now on we assume no two edges in $E_P$ are incident.
We claim that we can find $e,f \in E_P$ with the following property. \zed

\begin{enumerate}[label=(\textasteriskcentered)]
\item \label{clm:star} There is no automorphism which fixes $e$ and $f$, swaps the endpoints of $e$ or $f$, and preserves the colouring \aed $c$ \zed  on all vertices not incident to $e$ or $f$.
\end{enumerate}

Once we have found such edges, let $\tilde{c}$ be the colouring obtained from $c$ by changing the colour of the endpoints of $e$ to $0$ and the colour of the endpoints of $f$ to $1$. Since $e$ and $f$ are the only \aed $\tilde{c}$-monochromatic edges in $E_P$, \zed  they must be fixed by any \aed $\tilde{c}$-preserving automorphism. \zed By \ref{clm:star} every \aed $\tilde{c}$-preserving \zed automorphism must fix the endpoints of both $e$ and $f$ and thus $\tilde{c}$ is less symmetric than $c$. Since \aed there are $\tilde{c}$-monochromatic edges we can apply the observation we made at the beginning of our proof of Claim~\ref{clm:Path_classes_One}, using $\tilde{c}$ instead of $c$. Hence there is an edge colouring which is less symmetric than $\tilde{c}$ and thus also less symmetric than $c$.\zed  

It remains to show that we can find edges satisfying \ref{clm:star}. Since there are no cycle equivalence classes, every cycle in $G$ contains at least one edge in $E_P$, hence $G - E_P$ is a forest. No edge of $E_P$ has both its endpoints in the same tree $T$, otherwise we would \aed obtain a cycle in $G$ that only contains \zed  one path equivalence class, \aed and this is impossible by Lemma~\ref{lem:existence-pathclass}. \zed

\aed Recall that, by assumption, no two edges in $E_P$ are incident. \zed
Let $e_0 = u_0v_0 \in E_P$, let $T$ be the tree in $G-E_P$ containing $u_0$, and let $C_1$ and $C_2$ be two cycles \aed in $G$ \zed  that both contain $e_0$ but differ in the other edge incident to $u_0$. Such cycles exist because otherwise there would be an edge equivalent to $e_0$ contradicting the assumption that $e_0$ forms its own equivalence class. Assuming that $C_i$ traverses $e_0$ from $v_0$ to $u_0$, let $e_i = u_iv_i$ be the next edge in $E_P$ visited by $C_i$ for $i \in \{1,2\}$.

Denote by $d$ the distance between vertices in $G-E_P$, allowing the value $\infty$ for vertices in different components. 

If there are \aed $i,j \in \{0,1,2\}$ \zed such that $d(u_i,u_j) \neq d(v_i,v_j)$, then an automorphism that swaps $u_i$ and $v_i$ must move $u_j$ to a vertex $x$ with $d(x,v_i) = d(u_j,u_i)$. But $x = u_j$ is impossible since $v_j \notin T$ and thus $d(u_j,v_i) = \infty$, and $x = v_j$ is impossible since $d(u_i,u_j) \neq d(v_i,v_j)$. Hence there is no automorphism which swaps $u_i$ and $v_i$ and fixes $e_j$. Analogously, there is no automorphism which swaps $u_j$ and $v_j$ and fixes $e_i$. Hence $e = e_i$ and $f = e_j$ satisfy~\ref{clm:star}.

Finally assume that $d(u_i,u_j) = d(v_i,v_j)$ for all $i,j$. Since $u_0$ lies on the unique path connecting $u_1$ and $u_2$ in $T$. we know that $d(u_1,u_2) = d(u_1,u_0) + d(u_0,u_2)$. The same equation hence also holds for $v_0$, $v_1$, and $v_2$ showing that $v_0$ lies on the unique path connecting $v_1$ and $v_2$ in some other tree. Now an automorphism swapping $u_1$ and $v_1$ and fixing $e_2$ would have to swap these two paths. Hence such an automorphism would have to swap $u_0$ and $v_0$, but since $c(u_0) \neq c(v_0)$ this means that it would not preserve the colouring. Since an analogous statement is true for an automorphism swapping $u_2$ and $v_2$ and fixing $e_1$, we conclude that in this case $e = e_1$ and $f = e_2$ satisfy \ref{clm:star}.
\end{proof}

\section{Distinguishing colourings}

As mentioned \aed previously, \zed  our main results \aed can \zed be used to characterise the graphs for which equality holds in Theorem~\ref{thm:distindexdistnumber}. In \cite{alikhani-distindexdistnumber}, the following family $\mathcal T$ of trees is introduced. A tree $T$ is contained in \aed $\mathcal T$ \zed if it satisfies the following requirements:

\begin{enumerate}
\item $T$ is bicentred with central edge $e$, 
\item the rooted trees $T_1$ and $T_2$ attached to the endpoints of $e$ are isomorphic, and
\item $T_1$ has a unique distinguishing $k$-edge colouring up to isomorphism.
\end{enumerate}

It is then proved that the finite members of this class are the only finite trees achieving equality in Theorem~\ref{thm:distindexdistnumber}. We extend this result and show that a graph $G$ satisfies Theorem~\ref{thm:distindexdistnumber} with equality if and only if $G \in \mathcal T$.

The following example shows how to construct infinitely many members of $T$.

\begin{exmp}
Let $\{T_i\mid i \in I \}$ be a (finite or infinite) collection of (finite or infinite) rooted trees all of which admit a distinguishing $k$-colouring. Assume further that each $T_i$ only admits finitely many non-isomorphic distinguishing $k$-colourings and let $n_i$ be the number of such colourings. 

Take a disjoint union of an edge $e=uv$ and $2 k n_i$ copies of each $T_i$. For every $i$ draw an edge from $u$ to the roots of $k n_i$ copies of $T_i$. For the remaining copies of $T_i$ draw an edge from $v$ to the root.

Figure~\ref{fig:bad} shows a tree obtained by this construction for $k=2$ obtained in this way, where the collection consisted of a single vertex and a path of length $1$.
\end{exmp}

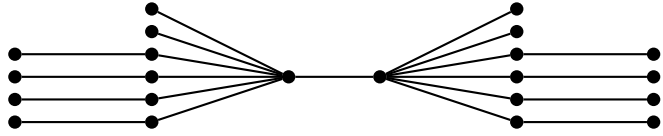
\begin{figure}
\begin{center}
\begin{tikzpicture}
	\GraphInit[vstyle=simple]
	\tikzset{VertexStyle/.style = {shape = circle, fill = black, inner sep = 0pt, minimum size = 5pt}}
	\begin{scope}[rotate=90, xscale = .3, yscale=.6]
	\grEmptyPath[prefix=b,RA=1,RS=7]{4}
	\grEmptyPath[prefix=a,RA=1,RS=4]{6}
	\Vertex[x=2,y=1]{x0}
	\Vertex[x=2,y=-1]{y0}
	\grEmptyPath[prefix=c,RA=1,RS=-4]{6}
	\grEmptyPath[prefix=d,RA=1,RS=-7]{4}
	\EdgeFromOneToAll{x}{a}{0}{6}
	\EdgeFromOneToAll{y}{c}{0}{6}
	\EdgeIdentity*{a}{b}{0,...,3}
	\EdgeIdentity*{c}{d}{0,...,3}
	\EdgeIdentity*{x}{y}{0}
	\end{scope}
\end{tikzpicture}
\end{center}
\caption{An element of $\mathcal T$.}
\label{fig:bad}
\end{figure}

\begin{thm}
Let $T$ be a bicentred tree, let $e=uv$ be its central edge, and let $T_1$ and $T_2$ be the components of $T-e$. Then for any integer $k \geq 2$ the following are equivalent.
\begin{enumerate}[label=(\roman*)]
\item \label{itm:distnumberdistindex} 
$D(T) = k$ and $D'(T)= k+1$.
\item \label{itm:centraledge} 
$D(T) = k$ and there is no distinguishing $k$-vertex colouring $c$ with $c(u) = c(v)$.
\item \label{itm:tbad} 
$T \in \mathcal T$.
\end{enumerate}
\end{thm}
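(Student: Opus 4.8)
The plan is to reduce everything to edge colourings of $T$ and of its two ``halves''. For $(i)\Leftrightarrow(ii)$, note first that since $T$ is bicentred every automorphism fixes the central edge $e=uv$, so composing an edge colouring of $T$ with a permutation of the colour set that moves the colour of $e$ to $0$ changes neither its stabiliser nor whether it is distinguishing; thus $D'(T)\le k$ iff $T$ has a distinguishing $k$-edge colouring with $e$ coloured~$0$. By Lemma~\ref{lem:colouringbijection}\,(2) these are in a stabiliser-preserving bijection with the $k$-vertex colourings of $T$ giving colour $0$ to both centres, and one further colour permutation identifies this with the existence of a distinguishing $k$-vertex colouring $c$ with $c(u)=c(v)$. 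As $D'(T)\le k+1$ always by Theorem~\ref{thm:distindexdistnumber} (a bicentred tree other than $K_2$ has order $\ge 3$; $T=K_2$, where no distinguishing edge colouring exists, is a degenerate case to treat separately), ``$D'(T)=k+1$'' is equivalent to ``$D'(T)>k$'', i.e.\ to the failure of that existence; adjoining the common hypothesis $D(T)=k$ yields $(i)\Leftrightarrow(ii)$.

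\textbf{Structure of $\Aut T$; the case $T_1\not\cong T_2$.} Since $\Aut T$ fixes $e$, every automorphism either fixes both $u,v$ or interchanges them, the latter being possible iff the rooted trees $T_1$, $T_2$ (rooted at $u$, $v$) are isomorphic. Writing $\Aut_r$ for root-fixing automorphisms, $\Aut T\cong\Aut_r T_1\times\Aut_r T_2$ when $T_1\not\cong T_2$ and $\Aut T\cong\Aut_r T_1\wr\mathbb Z_2$ when $T_1\cong T_2$. If $T_1\not\cong T_2$ then $T\notin\mathcal T$, so $(iii)$ fails; and if moreover $D(T)=k$, restricting a distinguishing $k$-vertex colouring of $T$ gives root-distinguishing $k$-vertex colourings of $T_1$ and $T_2$, which after relabelling colours both give colour $0$ to their roots, and (there being no side-swap) their union is a distinguishing $k$-vertex colouring of $T$ with $c(u)=c(v)=0$, so $(ii)$ fails too. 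Hence here all three statements are false, consistent with the theorem.

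\textbf{The case $T_1\cong T_2$ (heart of the proof).} Fix a rooted isomorphism $\psi\colon T_1\to T_2$; by the first step an edge colouring of $T$ with $e$ coloured $0$ is just a pair $(d_1,d_2)$ of $k$-edge colourings of $T_1,T_2$, and unwinding the wreath action it is distinguishing iff (a)~$d_1$ is fixed only by the identity root automorphism of $T_1$ and likewise $d_2$ on $T_2$, and (b)~there is no root automorphism $\alpha$ of $T_1$ with $d_1=d_2\circ\psi\circ\alpha$; under (a), condition (b) says exactly that $d_1$ and the pull-back $d_2\circ\psi$ lie in \emph{different} $\Aut_r T_1$-orbits. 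Assuming $(ii)$, by the first step $D'(T)=k+1$, so no such pair exists; taking $d_1,d_1'$ to be any two distinguishing $k$-edge colourings of $T_1$ and $d_2:=d_1'\circ\psi^{-1}$ forces $d_1,d_1'$ into one $\Aut_r T_1$-orbit, so $T_1$ has an essentially unique distinguishing $k$-edge colouring and $T\in\mathcal T$. Conversely, assuming $(iii)$: if a pair $(d_1,d_2)$ satisfies (a) then $d_1$ and $d_2\circ\psi$ are distinguishing $k$-edge colourings of $T_1$, hence (by uniqueness) in the same orbit, so $d_1=d_2\circ\psi\circ\alpha$ for some $\alpha$, and the side-swap of $T$ built from $\psi\circ\alpha$ preserves $(d_1,d_2)$; if (a) fails, $(d_1,d_2)$ is already non-distinguishing. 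Either way $T$ has no distinguishing $k$-edge colouring, so $D'(T)=k+1$ by Theorem~\ref{thm:distindexdistnumber}. Finally $D(T)\ge D'(T)-1=k$, while $D(T)\le k$ by colouring $T_1$ with the vertex colouring corresponding (via Lemma~\ref{lem:colouringbijection} on the rooted tree $T_1$) to its unique distinguishing $k$-edge colouring, transporting it to $T_2$, and recolouring the root of $T_2$ with a colour different from that of the root of $T_1$ to destroy all side-swaps; so $D(T)=k$ and $(ii)$ holds.

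\textbf{Main obstacle.} The delicate part is the dictionary of the third paragraph: extracting from the (semidirect-product) action of $\Aut T$ exactly when a pair of colourings of the halves glues to a distinguishing colouring of $T$, and matching the ``unique distinguishing $k$-edge colouring up to isomorphism'' clause in the definition of $\mathcal T$ with the single-$\Aut_r T_1$-orbit condition --- here it is essential that an isomorphism of $k$-edge-coloured trees is colour-preserving (no permutation of the colour set), which is exactly what lets the side-swap construction in $(iii)\Rightarrow(ii)$ go through. A secondary nuisance is pinning down $D(T)=k$ (not merely $D(T)\le k$) in that direction, and disposing of the degenerate cases $T=K_2$ and a half being a single vertex, where $D'$ behaves badly.
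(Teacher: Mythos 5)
Your proposal is correct and follows essentially the same route as the paper: both use Lemma~\ref{lem:colouringbijection} (plus colour permutations fixing the central edge) to translate between distinguishing edge colourings of $T$ and vertex colourings with $c(u)=c(v)$, analyse the side-swapping automorphisms when $T_1\cong T_2$ to match ``unique distinguishing $k$-edge colouring of $T_1$'' with the nonexistence of a distinguishing $k$-edge colouring of $T$, and invoke Theorem~\ref{thm:distindexdistnumber} together with the two-halves construction to pin down $D(T)=k$ and $D'(T)=k+1$. Your explicit wreath-product/orbit bookkeeping and the flagging of the degenerate case $T=K_2$ are only presentational refinements of the paper's argument.
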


\begin{proof}
We first show that \ref{itm:distnumberdistindex} $\implies$ \ref{itm:centraledge}. If there was a distinguishing $k$-vertex colouring with $c(u) = c(v)$, then (by swapping colours), there is such a colouring with $c(u) = c(v) = 0$, and thus by Lemma~\ref{lem:colouringbijection} also a distinguishing $k$-edge colouring.

For the implication \ref{itm:centraledge} $\implies$ \ref{itm:tbad} first observe that if $T_1$ and $T_2$ are not isomorphic, then every automorphism would fix $u$ and $v$. Thus in any distinguishing $k$-vertex colouring we can change the colours of $u$ and $v$ to obtain a distinguishing vertex colouring with $c(u) = c(v)$. Further, if there is no distinguishing vertex colouring with \aed $c(u) = c(v)$, \zed  then any two \aed distinguishing \zed vertex colourings of $T_1$ with $c(u) = 0$ are isomorphic. Clearly, this implies that their canonical edge colourings must be isomorphic as well and by Lemma~\ref{lem:colouringbijection} there are no further distinguishing $k$-edge colourings of $T_1$.

Finally, for \ref{itm:tbad} $\implies$ \ref{itm:distnumberdistindex}, note that if $T_1$ has an (up to isomorphism) unique distinguishing $k$-edge colouring, then every $k$-edge colouring of $T$ is either not distinguishing in $T_1$ or $T_2$, or has a colour preserving automorphism swapping $T_1$ and $T_2$. Hence $D'(T) > k$. On the other hand, Lemma~\ref{lem:colouringbijection} implies that there is a distinguishing vertex colouring of $T_1$ with $c(u) = 0$ and by changing the colour of the root we obtain $k$ non-isomorphic distinguishing $k$-vertex colourings. By colouring $T_1$ and $T_2$ by two non-isomorphic distinguishing vertex colourings we get a distinguishing vertex colouring of $T$, whence $D(T) \leq k$. By Theorem \ref{thm:distindexdistnumber} the only possibility for $D'(T) > k$ and $D(T) \leq k$ is $D(T) = k$ and $D'(T)= k+1$. 
\end{proof}

\bibliographystyle{abbrv}
\bibliography{ref}

\end{document}